\tikzset{->-/.style={decoration={markings,mark=at position #1 with {\arrow{>}}},postaction={decorate}}}
\definecolor{qqwuqq}{rgb}{0,0.39215686274509803,0}
\definecolor{qqqqff}{rgb}{0,0,1}
\definecolor{uuuuuu}{rgb}{0.26666666666666666,0.26666666666666666,0.26666666666666666}
\definecolor{xdxdff}{rgb}{0.49019607843137253,0.49019607843137253,1}
\definecolor{ududff}{rgb}{0.30196078431372547,0.30196078431372547,1}
\definecolor{zzttqq}{rgb}{0.8,0,0.4}
\definecolor{qqwuqq}{rgb}{0,0.39215686274509803,0}
\definecolor{qqqqff}{rgb}{0,0,1}
\definecolor{uuuuuu}{rgb}{0.26666666666666666,0.26666666666666666,0.26666666666666666}
\definecolor{xdxdff}{rgb}{0.49019607843137253,0.49019607843137253,1}
\definecolor{ududff}{rgb}{0.30196078431372547,0.30196078431372547,1}
\pgfplotsset{compat=1.15}
    \def\MR#1{}
\Crefname{Lemma}{Lemma}{Lemmas}
\Crefname{Theorem}{Theorem}{Theorems}
\theoremstyle{plain}
\newtheorem{Theorem}{Theorem}[section]
\newtheorem{Lemma}[Theorem]{Lemma}
\newtheorem{Corollary}[Theorem]{Corollary}
\newtheorem{Proposition}[Theorem]{Proposition}
\newtheorem{Algorithm}[Theorem]{Algorithm}
\theoremstyle{definition}
\newtheorem{Assumptions and Discussion}[Theorem]{Assumptions and Discussion}
\newtheorem{Example}[Theorem]{Example}
\newtheorem{Definition}[Theorem]{Definition}
\newtheorem{Remark}[Theorem]{Remark}
\newtheorem{Observation}[Theorem]{Observation}
\newtheorem{Notation}[Theorem]{Notation}
\theoremstyle{remark}
\newtheorem*{acknowledgment*}{Acknowledgment}
\def\deg{\operatorname{deg}}
\def\ini{\operatorname{in}} 
\def\ker{\operatorname{ker}}
\def\KK{{\mathbb K}}
\def\NN{{\mathbb N}}
\def\part{\operatorname{part}}
\def\supp{\operatorname{supp}}
\def\ZZ{{\mathbb Z}}
\newcommand\bda{{\bm a}}
\newcommand\bdB{{\bm B}}
\newcommand\bdb{{\bm b}}
\newcommand\bdc{{\bm c}}
\newcommand\bdd{{\bm d}}
\newcommand\bdi{{\bm i}}
\newcommand\bdj{{\bm j}}
\newcommand\bdp{{\bm p}}
\newcommand\bdq{{\bm q}}
\newcommand\bdT{{\bm T}}
\newcommand\bdx{{\bm x}}
\newcommand\bfA{\mathbf{A}}
\newcommand\bfB{\mathbf{B}}
\newcommand\bfC{\mathbf{C}}
\newcommand\bfI{\mathbf{I}}
\newcommand\calD{\mathcal{D}}
\newcommand\calF{\mathcal{F}}
\newcommand\calG{\mathcal{G}}
\newcommand\calH{\mathcal{H}}
\newcommand\calJ{\mathcal{J}}
\newcommand\calK{\mathcal{K}}
\newcommand\calR{\mathcal{R}}
\newcommand\Supp{\operatorname{Supp}}
\begin{document}

\title{Blowup Algebras of $n$--dimensional Ferrers Diagrams}

\author[Kuei-Nuan Lin, Yi-Huang Shen]{Kuei-Nuan Lin and Yi-Huang Shen}

\thanks{\today}

\thanks{2020 {\em Mathematics Subject Classification}.
    Primary 
    13A30 
    13F65  	
    05E40 
    Secondary 14M25  	
}

\thanks{Keyword: Multi-Rees algebra, Ferrers diagram, Cohen--Macaulay, normality, singularity}

\address{Department of Mathematics, The Penn State University, McKeesport, PA,
15132, USA}
\email{linkn@psu.edu}

\address{School of Mathematical Sciences, University of Science and Technology of China, Hefei, Anhui, 230026, P.R.~China}
\email{yhshen@ustc.edu.cn}

\begin{abstract}
    We demonstrate that the direct sum of ideals satisfying the strong $\ell$-exchange property is of fiber type.  Furthermore, we provide Gr\"obner bases of the presentation ideals of multi-Rees algebras and the corresponding special fibers, when they are associated with an $n$-dimensional Ferrers diagram that is standardizable. In particular, we show that these blowup algebras are
    Koszul Cohen--Macaulay normal domains and classify their singularities.
\end{abstract}

\maketitle

\section{Introduction}

Rees algebras play a central and pivotal role in the realms of commutative algebra and algebraic geometry, as extensively discussed in \cite{vasconcelos1994arithmetic}. They have a wide range of applications in various fields such as elimination theory, geometric modeling, and chemical reaction networks, as explained in \cites{CWL, Cox, CLS}.

Generalizing the classical investigation of the Rees algebra and the special fiber ring of a single ideal, we study the multi-Rees algebra associated with a collection of ideals.
Let $R=\KK[x_1,\ldots,x_n]$ be a polynomial ring over a field $\KK$, and
$ I_1, \ldots, I_r$ be homogeneous ideals in $R$.
The \emph{multi-Rees algebra} of this collection of ideals encapsulates the homogeneous coordinate ring of the blowup along the subschemes defined by these ideals. It is also identified as the Rees algebra of the module $I_1 \oplus \cdots \oplus I_r$, defined as the following multi-graded $R$-algebra:
\[
    \mathcal{R} (I_1 \oplus \cdots \oplus I_r) \coloneqq \bigoplus_{(a_1, \ldots, a_r) \in \mathbb{Z}_{\geq 0}^n} I_1^{a_1} \cdots I_r^{a_r}t_1^{a_1}\cdots t_r^{a_r} \subseteq R[t_1,\ldots, t_r].
\]
Here, auxiliary variables $t_1, \ldots, t_r$ are introduced. A related concept is the \emph{special fiber ring}, denoted by $\mathcal{F}(I_1 \oplus \cdots \oplus I_r)$. It corresponds to the image resulting from the blowup map, and can be expressed by $\mathcal{R}(I_1 \oplus \cdots \oplus I_r) \otimes_{R}\mathbb{K}$.
An important approach to studying the multi-Rees algebra and its special fiber is to represent them as quotients of polynomial rings respectively:
\[
    \mathcal{R} (I_1 \oplus \cdots \oplus I_r) \cong S[T_1,\ldots, T_\mu]/\mathcal{J}
\]
and
\[
    \mathcal{F} (I_1 \oplus \cdots \oplus I_r) \cong \mathbb{K}[T_1,\ldots, T_\mu]/\mathcal{K},
\]
where $\mu$ is the total number of minimal generators of $I_1, \ldots, I_r$.

A fundamental problem in the study of multi-Rees algebras is to find the implicit equations of the presentation ideals $\mathcal{J}$ and $\mathcal{K}$. Cox, Lin, and Sosa demonstrated that an explicit description of $\mathcal{J}$ and $\mathcal{K}$ provides insight into the equilibrium solutions of a chemical reaction network, as discussed in \cite{CLS}. 
Furthermore, when each of the ideals $I_1, \ldots, I_r$ is generated by monomials of the same degree, the presentation ideals $\mathcal{J}$ and $\mathcal{K}$ are generated by multigraded binomials.
Consequently, the isomorphisms $\mathcal{R} (I_1 \oplus \cdots \oplus I_r) \cong R[T_{1},\ldots, T_{\mu}]/\mathcal{J}$ and $\mathcal{F} (I_1 \oplus \cdots \oplus I_r) \cong \mathbb{K}[T_{1},\ldots, T_{\mu}]/\mathcal{K}$ yield coordinate rings of toric varieties, given that $\mathcal{J}$ and $\mathcal{K}$ are prime ideals.
Nevertheless, this implicit-equation problem remains open in general.

In addition to determining the presentation equations of the multi-Rees algebras, our interest lies in identifying instances where these algebras exhibit Koszul properties. A graded ring over a field $\mathbb{K}$ is \emph{Koszul} if the residue field $\mathbb{K}$ possesses a linear resolution over this ring. Koszul algebras are well-studied in classical commutative algebra and algebraic geometry, due to their favorable homological characteristics.
For example, it has been established that if $\mathcal{R}(I)$ is Koszul, then linear powers exist for $I$, meaning that every power of $I$ has a linear resolution \cite[Corollary 3.6]{Blum}. Bruns and Conca extended this result to the multigraded setting in \cite[Theorem 3.4]{BCresPowers}, showing that when $\mathcal{R} (I_1 \oplus \cdots \oplus I_r)$ is Koszul, products of $I_1,\ldots, I_r$ exhibit linear resolutions.

To establish the Koszul property of an algebra $A$, an effective strategy is to demonstrate its \emph{$G$-quadratic} nature. This involves expressing $A$ via an isomorphism $A \cong S/Q$, where $S$ is a polynomial ring and $Q \subset S$ is an ideal possessing a quadratic Gr\"obner basis with respect to a certain monomial order (refer to \cite[Theorem 6.7]{EH} or \cite{Froberg}).  Using this method, Lin and Polini established the Koszul property for multi-Rees algebras of powers of maximal ideals \cite[Theorem 2.4]{LinPolini}. DiPasquale and Jabbar Nezhad confirmed the validity of Bruns and Conca's conjecture for principal strongly stable ideals \cite[Corollary 6.4]{dipasquale2020koszul}.  Later, Kara, Lin, and Sosa demonstrated the Koszul nature of the multi-Rees algebra associated with two-generated strongly stable ideals \cite[Theorems 3.2 and 6.11]{KLS}.

In this work, we begin by extending the findings of Herzog, Hibi, and Vladoiu \cite[Theorem 5.1]{MR2195995} to the realm of multi-Rees algebras. Specifically, we introduce the concept of the \emph{strong $\ell$-exchange property} for a collection of ideals, and demonstrate that the direct sum of those ideals is of fiber-type. This means that the presentation ideal of the multi-Rees algebras is the sum of linear relationships and the presentation ideal of the fiber ring (\Cref{thm:ReesIdeal}). This result also generalizes 
\cite[Theorem 3.2]{KLS}
of Kara, Lin and Sosa, since any collection of strongly stable ideals satisfies the strong $\ell$-exchange property.

Our subsequent focus narrows down to a monomial ideal, $I_{\calD}$, associated with an $n$-dimensional Ferrers diagram, $\calD$, chosen for their relevance to statistics and coding theory (\cites{CNPY, GR}). Specifically, exploring the presentation ideals of their blowup algebras offers a solution for Maximum Likelihood Estimation problems (\cite{HKS}). Previous  studies (\cite{CN,CNPY}) have addressed the presentation ideals of $\calR(I_{\calD})$ and $\calF(I_{\calD})$ for $n=2$, 
proving these algebras to be Koszul. However, for $n=3$, the minimal generating sets of the presentation ideals for $\calR(I_{\calD})$ and $\calF(I_{\calD})$ could involve cubic generators in general, making these algebras non-Koszul (\cite[Example 2.4]{Lin-Shen3D}).  Due to this reason, Lin and Shen introduced the \emph{projection property} for the three-dimensional Ferrers diagram, and established the Koszul property for $\calR(I_{\calD})$ and $\calF(I_{\calD})$ under this condition (refer to \Cref{thm:3D}).
However, the monomial order, naturally extended from $n = 3$ to $n = 4$, does not lead to a $G$-quadratic quality under the projection property assumption. 
This leads us to the notion of \emph{standardizable diagrams} (\Cref{def:SDP}), which is a generalization of the \emph{strong projection property} in \cite{Lin-Shen3DMulti} for investigating the special fiber rings of a pair of three-dimensional Ferrers diagrams. 

In the final section of our work, we prove that both the multi-Rees algebra and the special fiber are $G$-quadratic (\Cref{ReesnD,FiberD}) when the associated $n$-dimensional Ferrers diagram is \emph{standardizable}. Consequently, these algebras are normal Cohen--Macaulay domains, and their singularities are classified (\Cref{CMFiber} and \Cref{ReesnD}). It is worth noting that our proof strategy is notably concise and efficient compared to the $n=2$ case in \cite{CNPY} and the $n=3$ case in \cite{Lin-Shen3D}.

\section{Multi-Rees Algebras and its special fibers}

Throughout this work, we will use the following notation convention.

\begin{Notation}
    \label{notation:collection}
    \begin{enumerate}
        \item To avoid confusion, the set of positive integers is denoted by $\ZZ_+$, and the set of non-negative integers is denoted by $\NN$.
        \item If $p$ is a positive integer, we denote the set $\{1,2,\dots,p\}$ by $[p]$.
        \item Let $R = \KK[\bdx] = \KK[x_1,\ldots,x_{n}]$ be the polynomial ring over a field $\KK$, endowed with the lexicographic order defined by $x_1>\cdots >x_n$. Furthermore, let $r$ be a positive integer and $I_1, \dots, I_r$ be a collection of monomial ideals in $R$. For each $i$, suppose that $\{f_{i,1}, \dots,f_{i,\mu_i}\}$ is the minimal monomial generating set of $I_i$, such that $\deg (f_{i,j})=d_i$ for all $j \in[\mu_i]$ and $f_{i,1} >_{lex} f_{i,2} >_{lex} \dots >_{lex} f_{i,\mu_i}$.
        \item Additionally, we frequently consider the polynomial rings $S = R[\mathbf{T}]=R[T_{i,j}:i\in [r],j\in [\mu_i]]$ and $T= \KK[\bdT]= \KK[T_{i,j}:i\in [r],j\in [\mu_i]]$.
    \end{enumerate}
\end{Notation}

To describe the presentations of the \emph{symmetric algebra} $\mathcal{S}(I_1 \oplus \cdots \oplus I_r)$, the \emph{multi-Rees algebra} $\mathcal{R}(I_1 \oplus \cdots \oplus I_r)$, and the \emph{special fiber ring} $\mathcal{F}(I_1 \oplus \cdots \oplus I_r)$, we introduce the following homomorphisms:
\begin{align*}
    \rho&: S \longrightarrow \mathcal{S}(I_1 \oplus \cdots \oplus I_r),\\
    \phi&: S \longrightarrow \mathcal{R}(I_1 \oplus \cdots \oplus I_r),\\
    \intertext{and}
    \psi&: T \longrightarrow \mathcal{F}(I_1 \oplus \cdots \oplus I_r).
\end{align*}
Here, for each $i$ and $j$, we require that $\rho(x_{i}) = x_{i}=\phi(x_{i})$, $\rho(T_{i,j})=f_{i,j}=\psi(T_{i,j})$, and $\phi(T_{i,j})=f_{i,j} t_i$.

\begin{Definition}
    \begin{enumerate}[a]
        \item Let $\mathcal{L}\coloneqq \ker(\rho)$, $\mathcal{J}\coloneqq \ker(\phi)$, and $\mathcal{K}\coloneqq \ker (\psi)$.
            They are known as the \emph{presentation ideals} of $\mathcal{S}(I_1 \oplus \cdots \oplus I_r)$, $\mathcal{R}(I_1 \oplus \cdots \oplus I_r)$, and $\mathcal{F}(I_1 \oplus \cdots \oplus I_r)$, respectively. Alternatively, they are sometimes referred to as the \emph{symmetric ideal}, the \emph{Rees ideal}, and the \emph{special fiber ideal}, respectively.
        \item 
            If $\mathcal{J}=\mathcal{L}+\mathcal{K}S$, then $I_1 \oplus \cdots \oplus I_r$ is \emph{of fiber type}.  
    \end{enumerate}
\end{Definition}

\begin{Remark}
    Since $\calR(I_1 \oplus \cdots \oplus I_r)$ is a subring of the integral domain $R[t_1,\dots,t_r]$, the Rees ideal $\calJ$ is prime. Similarly, $\calF(I_1 \oplus \cdots \oplus I_r)\cong \KK[f_{i,j}t_i:i\in [r],j\in[\mu_j]]$ is a semigroup ring. Hence, the special fiber ideal $\calK$ is also prime.
\end{Remark}

The notion of $\ell$-exchange property was introduced in \cite{MR2195995}. It is a valuable tool for demonstrating the fiber-type property of ideals that are close to strongly stable ideals, when considering their Rees algebras. In this work, we adapt this concept to handle the multi-Rees algebras in a similar manner.

\begin{Definition}
    \label{def:Strong_L_Ex_P}
    Let $I_1, \dots, I_r$ be the ideals in \Cref{notation:collection}.
    They are said to satisfy the \emph{strong $\ell$-exchange property}, if the
    following condition holds: for arbitrary $r$-tuple of non-negative integers
$(w_1,\dots,w_r)\in \NN^r$, and any monomials
    $u=\prod_{i=1}^r\prod_{j=1}^{w_i} f_{i,\alpha_{i,j}}$,
    $v=\prod_{i=1}^r \prod_{j=1}^{w_i} f_{i,\beta_{i,j}}$ satisfying
    \begin{enumerate}[i]
        \item $\deg_{x_t}(u)=\deg_{x_t}(v)$ for $t=1,\dots,q-1$ with $q\le n-1$,
        \item $\deg_{x_q}(u)<\deg_{x_q}(v)$,
    \end{enumerate}
    there exists a factor $f_{i,\alpha_{i,j}}$ of $u$, and an integer $q'$ with
    $q<q' \le n$ such that $x_qf_{i,\alpha_{i,j}}/x_{q'}\in I_i$.
\end{Definition}

The theorem presented below is a straightforward generalization of \cite[Theorem 5.1]{MR2195995} to the case of multi-Rees algebras.

\begin{Theorem}
    \label{thm:ReesIdeal}
    Assume that $I_1,\dots,I_r$ are ideals as in \Cref{notation:collection} which satisfy the strong $\ell$-exchange property.  Let $\calH$ be the collection of binomials $x_{k_1}T_{i,j}-x_{k_2}T_{i,j'}$ where $x_{k_1}f_{i,j}=x_{k_2}f_{i,j'}$, $k_1<k_2$, and $k_2$ is the largest for which $x_{k_1}f_{i,j}/x_{k_2}$ belongs to $I_i$. Suppose that $\mathcal{G}$ is a Gr\"obner basis for the special fiber ideal $\calK$ with respect to a monomial order $\tau$ on $T$. Then $\mathcal{G}' \coloneqq  \calH \cup \mathcal{G}$ is a Gr\"obner basis for the Rees ideal $\calJ$ with respect to an extended monomial order $\tau'$ on $S$ such that $\tau'|_{T}=\tau$.  In addition, if $\calG$ is a reduced Gr\"obner basis, then so is $\calG'$.
\end{Theorem}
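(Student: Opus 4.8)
The plan is to produce an explicit extension $\tau'$ and then verify the Gr\"obner basis property through the divisibility criterion for binomial ideals, rather than through a full Buchberger $S$-polynomial computation. Concretely, I would take $\tau'$ to be the block (elimination) order on $S$ in which the $\bdx$-variables dominate the $\bdT$-variables, with $\bdx$ compared lexicographically ($x_1>\cdots>x_n$) and $\bdT$ compared by $\tau$; this manifestly satisfies $\tau'|_{T}=\tau$. Under this order each binomial $x_{k_1}T_{i,j}-x_{k_2}T_{i,j'}\in\calH$ has leading term $x_{k_1}T_{i,j}$, since $k_1<k_2$ forces $x_{k_1}>_{\lex}x_{k_2}$, and each $g\in\calG$ is $\bdx$-free. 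Since $\calJ=\ker\phi$ is a binomial prime, its initial ideal is generated by leading monomials of binomials in $\calJ$, so it suffices to show: every binomial $f=\bdx^{\bdc}\bdT^{\bda}-\bdx^{\bdd}\bdT^{\bdb}\in\calJ$ with $\LM_{\tau'}(f)=\bdx^{\bdc}\bdT^{\bda}$ has $\LM_{\tau'}(f)$ divisible by $\LM_{\tau'}(g')$ for some $g'\in\calG'$. The containment $\calG'\subseteq\calJ$ is immediate, as $\calH$ consists of syzygies of the $f_{i,j}$ and $\calK S\subseteq\calJ$.

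Writing $u=\prod_i\prod_j f_{i,j}^{a_{i,j}}$ and $v=\prod_i\prod_j f_{i,j}^{b_{i,j}}$, the equality $\phi(\bdx^{\bdc}\bdT^{\bda})=\phi(\bdx^{\bdd}\bdT^{\bdb})$ forces $\sum_j a_{i,j}=\sum_j b_{i,j}=:w_i$ for each $i$ (matching $t_i$-degrees) together with $\bdx^{\bdc}u=\bdx^{\bdd}v$ in $R$; thus $u,v$ are products of $w_i$ generators of $I_i$, exactly as in \Cref{def:Strong_L_Ex_P}. I then split according to the block order. If $\bdx^{\bdc}=\bdx^{\bdd}$ then $u=v$, so $\bdT^{\bda}-\bdT^{\bdb}\in\calK$; since $\LM_{\tau'}(f)=\bdx^{\bdc}\cdot\bdT^{\bda}$ and $\bdT^{\bda}=\LM_{\tau}(\bdT^{\bda}-\bdT^{\bdb})$ is divisible by $\LM_{\tau}(g)$ for some $g\in\calG$, the same leading monomial divides $\LM_{\tau'}(f)$.

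The heart of the argument is the remaining case $\bdx^{\bdc}>_{\lex}\bdx^{\bdd}$. Here I let $q$ be the smallest index with $c_q\ne d_q$; leadingness gives $c_q>d_q\ge 0$ and $c_t=d_t$ for $t<q$. Combining with $c_t+\deg_{x_t}(u)=d_t+\deg_{x_t}(v)$ for all $t$ (read off from $\bdx^{\bdc}u=\bdx^{\bdd}v$) yields $\deg_{x_t}(u)=\deg_{x_t}(v)$ for $t<q$ and $\deg_{x_q}(u)<\deg_{x_q}(v)$. Because $u$ and $v$ have equal total degree, $q=n$ is impossible, so $q\le n-1$ and the hypotheses of the strong $\ell$-exchange property hold. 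It then supplies a factor $f_{i,j}$ of $u$ (so $a_{i,j}>0$) and an index $q'>q$ with $x_qf_{i,j}/x_{q'}\in I_i$; taking $k_2$ to be the largest such index, the element $h=x_qT_{i,j}-x_{k_2}T_{i,j''}\in\calH$ has $\LM_{\tau'}(h)=x_qT_{i,j}$, and since $c_q>0$ and $a_{i,j}>0$ this divides $\LM_{\tau'}(f)$. This exhausts the cases and shows $\calG'$ is a Gr\"obner basis. I expect this $\ell$-exchange step to be the main obstacle: one must extract the pivot $q$ correctly from the block order, confirm $q\le n-1$ from the degree balance, and check that the generator produced by \Cref{def:Strong_L_Ex_P} yields an $\calH$-element whose leading monomial genuinely divides $\LM_{\tau'}(f)$. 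The careful translation between the degree data of $u,v$ and the exponents $\bdc,\bdd$ is precisely what the \emph{strong} (rather than ordinary) $\ell$-exchange hypothesis is built to control.

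For the reducedness claim I would assume $\calG$ reduced and check that no term of a member of $\calG'$ is divisible by the leading monomial of another. The cross-conditions between $\calH$ and $\calG$ are controlled by the block order, since leading monomials of $\calH$ carry a genuine $\bdx$-factor while members of $\calG$ are $\bdx$-free; the substantive point is self-reducedness of $\calH$, where the maximality in the definition of $\calH$ is essential. Indeed, if $\LM_{\tau'}(h)=x_{k_1}T_{i,j}$ were to divide the trailing term $x_{k_2'}T_{i,j}$ of another $h'=x_{k_1'}T_{i,j'}-x_{k_2'}T_{i,j}\in\calH$, then $k_1=k_2'$, yet the relation underlying $h$ would exhibit an index $k_2>k_1=k_2'$ with $x_{k_1'}f_{i,j'}/x_{k_2}\in I_i$, contradicting the maximality of $k_2'$. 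Together with the reducedness of $\calG$, this gives the reducedness of $\calG'$.
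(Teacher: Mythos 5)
Your proposal is correct and follows essentially the same route as the paper: the same product/block order $\tau'$, the same reduction to binomials in the prime ideal $\calJ$, the same case split according to whether the $\bdx$-parts agree, and the same application of the strong $\ell$-exchange property to produce an element of $\calH$ whose leading monomial $x_qT_{i,j}$ divides the leading term. You in fact supply two details the paper leaves implicit --- the verification that $q\le n-1$ via the equal total degrees of $u$ and $v$, and the explicit use of the maximality of $k_2$ to prove self-reducedness of $\calH$ (the paper only asserts reducedness is clear) --- but these are refinements of, not departures from, the paper's argument.
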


\begin{proof}
    Given two monomials $\bdx^\bda \bdT^\bdb$ and $\bdx^\bdc \bdT^\bdd$ in $S$, we set $\bdx^\bda \bdT^\bdb >_{\tau'} \bdx^\bdc \bdT^\bdd$ if and only if (i) $\bdx^\bda  >_{lex} \bdx^\bdc$ or (ii) $\bdx^\bda = \bdx^\bdc$ and $\bdT^\bdb >_{\tau} \bdT^\bdd$. It is the product order on $S$ of the lexicographic order on $R$ and the order $\tau$ on $T$ in the sense of \cite[page 17]{EH}.

    Take any $g\in \calJ$. Since the prime ideal $\calJ$ has a binomial generating set, 
    we can assume that $g$ is an irreducible binomial. We have two cases.
    \begin{enumerate}[a]
        \item Suppose that $g\in \calK$. Therefore, $\ini_{\tau'}(g)=\ini_{\tau}(g)$ is divisible by $\ini_{\tau'}(h)=\ini_{\tau}(h)$ for some $h\in \calG$.
        \item Suppose that $g\notin \calK$. Therefore, we may assume that $g=x_q\bdx^{\bda}\bdT^{\bdb} - x_{q'}\bdx^{\bda'}\bdT^{\bdb'}$ where $q$ is the smallest such that $x_q$ appears in the binomial $g$. Since $g$ is irreducible, $q\ne q'$ and $q\notin \supp(\bdx^{\bda'})$. Thus, $\deg_{x_q}(u)<\deg_{x_q}(v)$, where $u\coloneqq \psi(\bdT^{\bdb})$ and $v\coloneqq \psi(\bdT^{\bdb'})$. Whence, by the strong $\ell$-exchange property of $I_1,\dots,I_r$, we can find $T_{i,\alpha_{i,j}}$ dividing $\bdT^{\bdb}$ and $q''>q$ such that $x_qf_{i,\alpha_{i,j}}/x_{q''}\in I_i$. Let $q''$ be the largest such that $x_qf_{i,\alpha_{i,j}}/x_{q''}\in I_i$ and write $f_{i,s}= x_qf_{i,\alpha_{i,j}}/x_{q''}$. Whence, $h\coloneqq x_qT_{i,\alpha_{i,j}}-x_{q''}T_{i,s}\in \calJ$ with $\ini_{\tau'}(h)=x_qT_{i,\alpha_{i,j}}$ dividing $\ini_{\tau'}(g)=x_q\bdx^{\bda}\bdT^{\bdb}$.
    \end{enumerate}
    By the above discussion, it is clear now that $\calG'$ is a Gr\"obner basis of $\calJ$ with respect to $\tau'$. Furthermore, if $\calG$ is a reduced Gr\"obner basis, then it is clear that $\calG'$ is also a reduced Gr\"obner basis.
\end{proof}

\begin{Remark}
    A simple adaption of the argument presented in \cite[Example 4.2]{MR2195995} shows that any set of equigenerated strongly stable ideals satisfies the strong $\ell$-exchange property. Thus, \Cref{thm:ReesIdeal} also offers a succinct proof of the recent finding in \cite[Theorem 3.12]{KLS} by Kara, Lin and Sosa.
\end{Remark}

\section{Ferrers diagrams}

The final goal of this paper is to discuss the multi-Rees algebra linked to an $n$-dimensional Ferrers diagram. To begin with, we introduce essential concepts related to this combinatorial object.

\begin{Definition}
    \label{nF}
    \begin{enumerate}[a]
        \item An \emph{$n$-dimensional Ferrers diagram} is a nonempty set $\calD$ of finite lattice points in $ \ZZ_+^n$ with the property that, if $\bdj=(j_1,\ldots,j_n) \in \calD$, and $\bdi=(i_1,\ldots,i_n)\in \ZZ_+^n$ with $i_k\le j_k$ for each $k\in [n]$, then $\bdi\in \calD$. We say $\calD$ is \emph{rectangular} if there exits a point $(a_1,\ldots,a_n) \in \calD$ such that
            \[
                \calD=\Set{(i_1,\dots,i_n):1\le i_k \le a_k, 1\le k\le n}.
            \]
        Whence, it is the Cartesian product $[a_1]\times [a_2] \times \cdots \times [a_n]$.

        \item Given an $n$-dimensional Ferrers diagram $\calD$, let $m$ be a fixed positive integer such that
            \[
                m\ge \max\Set{j_1,\dots,j_n:(j_1,\ldots, j_n)\in\calD}.
            \]
            We will consider the associated polynomial ring
            \[
                R=R_\calD\coloneqq \KK[x_{i,j} : i\in [n], j\in [m]].
            \]
            For each $\bda=(a_1,\dots,a_n)\in \calD$, let $\bdx_\bda\coloneqq \prod_{i=1}^n x_{i,a_i}\in R$. Then, we have the \emph{Ferrers ideal}
            \[
                I_{\calD}\coloneqq  \left( \bdx_{\bda}:\bda \in \calD \right)\subset R.
            \]
    \end{enumerate}
\end{Definition}

The candidates for the presentation equations of the blowup algebras of $n$-dimensional Ferrers diagrams are similar to those of the three-dimensional case.

\begin{Definition}
    \label{2-minors}
    Let $\calD$ be an $n$-dimensional Ferrers diagram.
    \begin{enumerate}[a]
        \item Suppose that $H$ is a subset of $[n]$. For every $\bda=(a_1,\ldots,a_{n}),\bdb=(b_1,\ldots,b_{n})$ in $\calD $, we define $\bdp =(p_1,\ldots,p_{n})$ and $\bdq =(q_1,\ldots,q_{n})$ with $p_i=a_i$ and $q_i=b_i$ if $i\notin H$, and $p_i=b_i$ and $q_i=a_i$ if $i\in H$. Then, we introduce the \emph{interchange} of $\bda$ and $\bdb$ with respect to $H$:
            \[
                \bfI_{H}(\bda,\bdb)\coloneqq
                \begin{cases}
                    T_{\bda}T_{\bdb}-T_{\bdp}T_{\bdq}, & \text{if
                    $\bdp, \bdq\in \calD $},\\
                    0, & \text{otherwise},
                \end{cases}
            \]
            which is considered as an element in $\KK[\bdT] =\KK[T_{\bda}:\bda\in \calD]$.  Notice that $\bda=\bdb$ if and only if $\bdp=\bdq$, which will imply that $\bfI_H(\bda,\bdb)=0$.
        \item Furthermore, let
            \[
                \bfI(\calD)\coloneqq \Set{\bfI_{H}(\bda,\bdb) : H\subseteq [n], \bda,\bdb\in \calD, \bfI_H(\bda,\bdb)\ne 0 }.
            \]
            This is a collection of binomials with squarefree terms.
    \end{enumerate}
\end{Definition}

In the next section, we are going to generalize the following result to higher dimensional case.

\begin{Theorem}
    [{\cite[Theorem 6.1 and 6.5]{Lin-Shen3D}}]
    \label{thm:3D}
    Let $\calD$ be a three-dimensional Ferrers diagram that satisfies the \emph{projection property} in the sense of \cite[Definition 2.5]{Lin-Shen3D}. The set $\bfI(\calD)$ is a quadratic Gr\"{o}bner basis of $\calK$ with respect to a lexicographic order and $\calF(I_{\calD})$ is a Koszul Cohen--Macaulay normal domain. Moreover, the Rees algebra $\calR(I_{\calD})$ is Koszul and the ideal $I_{\calD}$ is of fiber type.
\end{Theorem}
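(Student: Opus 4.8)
The plan is to reduce the entire theorem to a single combinatorial statement about the special fiber ideal $\calK$, and then to transport the consequences to the Rees algebra through \Cref{thm:ReesIdeal}. Concretely, everything rests on the claim that $\bfI(\calD)$ is a Gr\"obner basis of $\calK$ for a suitable lexicographic order $\tau$ on $\KK[\bdT]$. First I would fix the lex order on the variables $T_\bda$ induced by an ordering of the lattice points of $\calD$, orient each interchange binomial $\bfI_H(\bda,\bdb)$ so that its leading term is the product $T_\bda T_\bdb$ of the ``unsorted'' pair, and record the trivial inclusion $\bfI(\calD)\subseteq\calK$. The substance is the reverse direction.

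Rather than checking Buchberger's criterion on every S-pair, I would argue by a normal-form (straightening) argument: every monomial $\bdT^\bdc$ can be rewritten, using the leading-term reductions coming from $\bfI(\calD)$, into a distinguished \emph{sorted} monomial, and I would then show that two sorted monomials congruent modulo $\calK$ must coincide. Since $\calK$ is the toric ideal of the semigroup generated by the $\bdx_\bda$, this last point is equivalent to showing that a sorted monomial is uniquely determined by its image $\prod\bdx_{\bda}$ in $R$, i.e.\ by its multidegree in the $x_{i,j}$. This is exactly where the \emph{projection property} enters: it guarantees that whenever a pair of points is unsorted in some coordinate the corresponding interchange lands back inside $\calD$, so the rewriting never stalls and the sorted representative is genuinely canonical. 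I expect this termination-and-uniqueness argument to be the main obstacle and the only real difficulty of the proof; everything downstream is formal.

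Once $\bfI(\calD)$ is known to be a quadratic Gr\"obner basis, the homological consequences follow along standard lines. As $\calK$ has a quadratic Gr\"obner basis, $\calF(I_{\calD})$ is $G$-quadratic, hence Koszul by \cite[Theorem 6.7]{EH}; it is a domain because it is a semigroup subring of $R$. The leading terms of $\bfI(\calD)$ are squarefree, so $\ini_\tau(\calK)$ is a squarefree monomial ideal, and a toric ideal admitting a squarefree initial ideal defines a normal semigroup ring (the squarefree initial ideal corresponds to a regular unimodular triangulation of the underlying polytope), giving normality. For Cohen--Macaulayness I would identify $\ini_\tau(\calK)$ with the Stanley--Reisner ideal of a simplicial complex, show that this complex is shellable so that $T/\ini_\tau(\calK)$ is Cohen--Macaulay, and then conclude using $\dim(T/\calK)=\dim(T/\ini_\tau(\calK))$ together with $\depth(T/\calK)\ge\depth(T/\ini_\tau(\calK))$ that Cohen--Macaulayness descends to $\calF(I_{\calD})$.

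Finally, for the Rees algebra I would first verify that $I_{\calD}$ satisfies the strong $\ell$-exchange property of \Cref{def:Strong_L_Ex_P}; for a single equigenerated monomial ideal this reduces to the assertion that whenever $\deg_{x_q}(u)<\deg_{x_q}(v)$ one can trade an $x_q$ out of some generator dividing $u$ for a later variable while remaining in $I_{\calD}$, which is again furnished by the projection property. With this in hand, \Cref{thm:ReesIdeal} (applied with $r=1$) applies verbatim: it yields $\calJ=\calL+\calK S$, so that $I_{\calD}$ is of fiber type, and it shows that $\calH\cup\bfI(\calD)$ is a Gr\"obner basis of $\calJ$ for the product order $\tau'$. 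Since the binomials of $\calH$ are quadratic and $\bfI(\calD)$ is quadratic, $\calJ$ is $G$-quadratic, whence $\calR(I_{\calD})$ is Koszul as well.
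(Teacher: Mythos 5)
First, a point of orientation: this paper does not prove \Cref{thm:3D} at all --- it is imported verbatim from \cite{Lin-Shen3D}, and the in-paper machinery that most resembles your sketch (\Cref{algo_standard_tab}, \Cref{LexClosed}, \Cref{FiberofOneGen}, \Cref{FiberD}) operates under the \emph{standardizable} hypothesis of \Cref{def:SDP}, not under the projection property. This matters, because your central step contains a genuine gap exactly at the seam between these two hypotheses. You assert that the projection property ``guarantees that whenever a pair of points is unsorted in some coordinate the corresponding interchange lands back inside $\calD$, so the rewriting never stalls.'' That closure-under-straightening statement is, essentially verbatim, the standardizability condition (it is what \Cref{LexClosed} extracts from \Cref{def:SDP}), and it is \emph{not} a consequence of the projection property of \cite[Definition 2.5]{Lin-Shen3D}. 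The paper introduces standardizability precisely because the projection property fails to deliver this closure: note that your straightening argument is dimension-independent, yet \Cref{exam:need_SPP} exhibits a four-dimensional diagram with the projection property whose fiber ideal has cubics in its reduced Gr\"obner basis. So the mechanism you propose cannot be what makes the three-dimensional theorem true; $\bfI(\calD)$ by definition omits interchanges whose targets leave $\calD$, and under the mere projection property one must show that some \emph{other} binomial of $\bfI(\calD)$ reduces the offending monomial. That order-specific, case-intensive analysis is the actual content of \cite[Theorems 6.1 and 6.5]{Lin-Shen3D} (and is why the present authors advertise their standardizable-case proof as ``notably concise'' by comparison). In your proposal this hard part is assumed rather than proved, and the uniqueness of sorted representatives with a given multidegree is likewise left unestablished.

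The downstream portions are essentially sound and do match the paper's general template, with two smaller corrections. For the Rees algebra, the strong $\ell$-exchange property of $I_\calD$ does hold and \Cref{thm:ReesIdeal} then gives fiber type and a quadratic Gr\"obner basis of $\calJ$ exactly as you say; but the exchange property needs no projection property at all --- it follows for \emph{any} Ferrers ideal from the order-ideal condition alone, which is the content of \Cref{LEx}, so invoking the projection property there is a misattribution (harmless, but symptomatic of the conflation above). For Cohen--Macaulayness, your plan to prove shellability of the initial complex is another unproven and unnecessarily heavy step: since you already obtain normality from the squarefree initial ideal via \cite[Proposition 13.15]{Sturmfels}, Cohen--Macaulayness follows at once from Hochster's theorem on normal affine semigroup rings \cite{Hochster}, which is the route the paper takes in \Cref{CMFiber}. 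With the Gr\"obner-basis core repaired (i.e., replaced by the genuine argument of \cite{Lin-Shen3D}, or by strengthening the hypothesis to standardizability as in \Cref{FiberD}), the rest of your outline would go through.
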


However, for higher dimensional case, the projection property condition as in \Cref{thm:3D} is not enough; see \Cref{exam:need_SPP} below.

\begin{Example}
    \label{exam:need_SPP}
    Let $\calD$ be the minimal four-dimensional Ferrers diagram containing the following lattice points:
    \[
        (1, 3, 4, 4), \;
        (1, 4, 3, 3), \;
        (2, 1, 4, 4), \;
        (2, 2, 3, 3), \;
        (3,1,3,3), \;
        (3,2,2,2).
    \]
    It is not difficult to verify that $\calD$ satisfies the projection property. However, the reduced Gr\"obner basis of the presentation ideal $\calK$ for the special fiber ring $\calF(I_\calD)$ contains $25$ additional binomials of degree $3$ when using the natural lexicographic order. 
\end{Example}

Therefore, we introduce the \emph{standardizability},
which can be seen as a generalization of the \emph{strong projection property} introduced in \cite{Lin-Shen3DMulti} for studying the special fiber rings of a pair of three-dimensional Ferrers diagrams. 
The rest of the paper is devoted to the computation of a Gr\"obner basis for the presentation ideal of the multi-Rees algebra associated with a standardizable Ferrers diagram.

\begin{Definition}
    \label{def:SDP}
    An $n$-dimensional Ferrers diagram $\calD$ is called \emph{standardizable} if for every $\bda \neq \bdb \in \calD$ such that $a_i=b_i$ for $i<k$ and $a_k<b_k$, there exists a lattice point
    \[
        \bdp=(a_1,\ldots,a_{k},\max\{a_{k+1},b_{k+1}\},\ldots, \max\{a_n,b_n\})\in \calD.
    \]
\end{Definition}

\begin{Example}
    \label{exam:SPP_diagram}
    \Cref{Shadow} displays 
    the smallest three-dimensional Ferrers diagram that contains the lattice points $(1,3,3)$, $(2,2,3)$, $(2,3,2)$, $(3,1,2)$, and $(3,2,1)$. It is also the smallest \emph{standardizable} Ferrers diagram that contains the lattice points $(2, 1, 3)$, $(2, 3, 1)$, $(3, 1, 2)$, and $(3, 2, 1)$.
    \begin{center}
        \begin{figure}[htbp] 
        \begin{tikzpicture}[every node/.style={scale=0.8}, line cap=round,line join=round,>=triangle 45,x=1cm,y=1cm, scale=0.50]
                \fill[pattern=crosshatch, fill opacity=0.5] (-2,-1) -- (0,-2) -- (0,-4) -- (-2,-5) -- (-4,-4) -- (-4,-2) -- cycle;
                \fill[pattern=north east lines,fill opacity=0.5]  (0,-2) -- (0,-4) -- (2,-5) -- (4,-4) -- (4,-2) -- (2,-3) -- cycle;
                \fill[pattern=crosshatch,fill opacity=0.5] (-4,2) -- (-2,1) -- (-2.0,-1) -- (-4,-2) -- (-6,-1) -- (-6,1) -- cycle;
                \fill[pattern=north east lines, fill opacity=0.5] (-4,2) -- (-2,1) -- (-2,3) -- (0,4) -- (-2,5) -- (-4,4) -- cycle;
                \draw[pattern={Lines[angle=135,distance=3pt]}] (4,4) -- (6,3) -- (6,1) -- (4,0) -- (2,1) -- (2,3) -- cycle;
                \draw[pattern={Lines[angle=135,distance=3pt]}] (0,4) -- (2,3) -- (2,1) -- (0,0) -- (-2,1) -- (-2,3) -- cycle;
                \draw[pattern={Lines[angle=135,distance=3pt]}] (2,1) -- (4,0) -- (4,-2) -- (2,-3) -- (0,-2) -- (0,0) -- cycle;

                \draw [line width=1.2pt] (-2,-1)-- (0,-2);
                \draw [line width=1.2pt] (-2,-3)-- (-2,-5);
                \draw [line width=1.2pt] (-2,-3)-- (0,-2);
                \draw [line width=1.2pt] (-2,1)-- (-2,-1);
                \draw [line width=1.2pt] (-2,1)-- (-4,0);
                \draw [line width=1.2pt] (-2,1)-- (0,0);
                \draw [line width=1.2pt] (-2,3)-- (-2,1);
                \draw [line width=1.2pt] (-2,3)-- (0,2);
                \draw [line width=1.2pt] (-2,5)-- (4,2);
                \draw [line width=1.2pt] (-4,-2)-- (-2,-1);
                \draw [line width=1.2pt] (-4,-2)-- (-2,-3);
                \draw [line width=1.2pt] (-4,-2)-- (-4,-4);
                \draw [line width=1.2pt] (-4,-4)-- (-2,-5);
                \draw [line width=1.2pt] (-4,0)-- (-4,-2);
                \draw [line width=1.2pt] (-4,2)-- (-2,1);
                \draw [line width=1.2pt] (-4,2)-- (-6.0,1.0);
                \draw [line width=1.2pt] (-4,4)-- (-2,3);
                \draw [line width=1.2pt] (-4,4)-- (-4,2);
                \draw [line width=1.2pt] (-6,-1)-- (-4,-2);
                \draw [line width=1.2pt] (-6,-3)-- (-4,-4);
                \draw [line width=1.2pt] (-6,1)-- (-4,0);
                \draw [line width=1.2pt] (-6.0,-1.0)-- (-6.0,-3.0);
                \draw [line width=1.2pt] (-6.0,1.0)-- (-6.0,-1.0);
                \draw [line width=1.2pt] (0,-2)-- (0,-4);
                \draw [line width=1.2pt] (0,-2)-- (2,-3);
                \draw [line width=1.2pt] (0,-4)-- (-2,-5);
                \draw [line width=1.2pt] (0,-4)-- (2,-5);
                \draw [line width=1.2pt] (0,0)-- (0,-2);
                \draw [line width=1.2pt] (0,0)-- (2.0,-1.0);
                \draw [line width=1.2pt] (0,2)-- (0,0);
                \draw [line width=1.2pt] (0,6)-- (-4,4);
                \draw [line width=1.2pt] (0,6)-- (6,3);
                \draw [line width=1.2pt] (2,3)-- (2.0,1.0);
                \draw [line width=1.2pt] (2,5)-- (-2,3);
                \draw [line width=1.2pt] (2.0,-1.0)-- (2.0,-5.0);
                \draw [line width=1.2pt] (2.0,-5.0)-- (4,-4);
                \draw [line width=1.2pt] (2.0,1.0)-- (0,0);
                \draw [line width=1.2pt] (2.0,1.0)-- (4,0);
                \draw [line width=1.2pt] (4,-2)-- (2.0,-3.0);
                \draw [line width=1.2pt] (4,-4)-- (6,-3);
                \draw [line width=1.2pt] (4,0)-- (2.0,-1.0);
                \draw [line width=1.2pt] (4,2)-- (4,-4);
                \draw [line width=1.2pt] (4,2)-- (6,3);
                \draw [line width=1.2pt] (4,4)-- (0,2);
                \draw [line width=1.2pt] (6,-1)-- (4,-2);
                \draw [line width=1.2pt] (6,1)-- (4,0);
                \draw [line width=1.2pt] (6,3)-- (6,-3);
              
                \draw (-3.1,4.5) node[anchor=north west] {\textbf{(2,1,3)}};
                \draw (-1.1,3.5) node[anchor=north west] {\textbf{(2,2,3)}};
                \draw (-3.1,-1.5) node[anchor=north west] {\textbf{(3,2,1)}};
                \draw (-5.1,1.5) node[anchor=north west] {\textbf{(3,1,2)}};
                \draw (0.9,0.5) node[anchor=north west] {\textbf{(2,3,2)}};
                \draw (1.9,-3) node[anchor=north west] {\textbf{(2,3,1)}};
                \draw (2.9,3.5) node[anchor=north west] {\textbf{(1,3,3)}};
                \begin{scriptsize}
                    \draw [fill=white] (-2,1) circle (2.5pt);
                    \draw [fill=white] (-2,3) circle (2.5pt);
                    \draw [fill=white] (-2,5) circle (2.5pt);
                    \draw [fill=white] (-2.0,-1.0) circle (2.5pt);
                    \draw [fill=white] (-2.0,-3.0) circle (2.5pt);
                    \draw [fill=white] (-2.0,-5.0) circle (2.5pt);
                    \draw [fill=white] (-4,-2) circle (2.5pt);
                    \draw [fill=white] (-4,-4) circle (2.5pt);
                    \draw [fill=white] (-4,0) circle (2.5pt);
                    \draw [fill=white] (-4,2) circle (2.5pt);
                    \draw [fill=white] (-4,4) circle (2.5pt);
                    \draw [fill=white] (-6.0,-1.0) circle (2.5pt);
                    \draw [fill=white] (-6.0,-3.0) circle (2.5pt);
                    \draw [fill=white] (-6.0,1.0) circle (2.5pt);
                    \draw [fill=white] (0,-2) circle (2.5pt);
                    \draw [fill=white] (0,-4) circle (2.5pt);
                    \draw [fill=white] (0,0) circle (2.5pt);
                    \draw [fill=white] (0,2) circle (2.5pt);
                    \draw [fill=white] (0,4) circle (2.5pt);
                    \draw [fill=white] (0,6) circle (2.5pt);
                    \draw [fill=white] (2,3) circle (2.5pt);
                    \draw [fill=white] (2,5) circle (2.5pt);
                    \draw [fill=white] (2.0,-1.0) circle (2.5pt);
                    \draw [fill=white] (2.0,-3.0) circle (2.5pt);
                    \draw [fill=white] (2.0,-5.0) circle (2.5pt);
                    \draw [fill=white] (2.0,1.0) circle (2.5pt);
                    \draw [fill=white] (4,-2) circle (2.5pt);
                    \draw [fill=white] (4,-4) circle (2.5pt);
                    \draw [fill=white] (4,0) circle (2.5pt);
                    \draw [fill=white] (4,2) circle (2.5pt);
                    \draw [fill=white] (4,4) circle (2.5pt);
                    \draw [fill=white] (6,-1) circle (2.5pt);
                    \draw [fill=white] (6,-3) circle (2.5pt);
                    \draw [fill=white] (6,1) circle (2.5pt);
                    \draw [fill=white] (6,3) circle (2.5pt);
                \end{scriptsize}
            \end{tikzpicture}
            \caption{A three-dimensional standardizable Ferrers diagram}
            \label{Shadow}
        \end{figure}
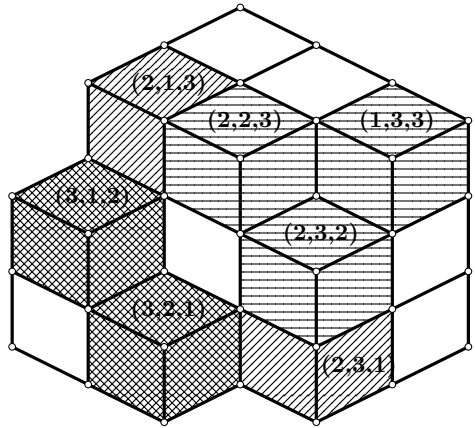
    \end{center}
\end{Example}

\begin{Remark}
    In general, there is no poset structure on the standardizable diagram $\calD$ such that $\bda,\bdb\le \bdp$ and $\bdp=\max(\bda,\bdb)$, using the notation in \Cref{def:SDP}. The reader can verify this with the lattice points $(3,1,1)$, $(3,2,1)$, and $(2,1,1)$ in \Cref{exam:SPP_diagram}.
\end{Remark}

\section{Ordering of tableau}

The aim of this paper is to extend \Cref{thm:3D} to multi-Rees algebras of arbitrary dimensional Ferrers diagrams. 
In this section, we introduce a useful total order on $\ZZ_+^{n}$, inspired by the work of De Negri \cite{DeNegri}. We will use this order in the following section when examining the presentation ideals of the blowup algebras.

\begin{Definition}
    \label{orderTuple}
    Let $n$ be a positive integer.
    \begin{enumerate}[a]
        \item For vectors $\bda,\bdb \in \ZZ_+^{n}$, we say $\bda>_{\sigma}\bdb$ if the first nonzero entry of $\bda-\bdb$ is negative. We will denote this total order by $\sigma$.

        \item In this paper, by \emph{tableau} we mean a matrix with entries in $\ZZ_+$. The tableau can be visualized as a rectangular collection of boxes filled with positive integers, similar to the \emph{Young tableau} in the literature. If $\bfA=[a_{i,j}]$ is a $p\times n$ tableau with row vectors $\bda^1,\dots,\bda^p$, it is written as $\bfA=[\bda^1,\dots,\bda^p]$ by abuse of notation. It is called \emph{semi-standard} if the row vectors satisfy
            \begin{equation}
                \bda^{1} \ge_{\sigma} \bda^{2} \ge_{\sigma} \dots \ge_{\sigma} \bda^{p}.
                \tag{Cond-1}
                \label{eqn:cond_1}
            \end{equation}
        \item Given two $p\times n$ semi-standard tableaux $\bfA=[\bda^1,\ldots, \bda^p]$ and $\bfB=[\bdb^1,\ldots,\bdb^p]$, we say $\bfA>_{\sigma}\bfB$ if there is an integer $k\in [p]$ such that $\bda^i=\bdb^i$ for $i\in [k-1]$ and $\bda^{k}>_{\sigma}\bdb^k$.
        \item Given a $p\times n$ tableau $\bfA=[a_{i,j}]$, for each $j\in [n]$, we will denote the multiset $\{a_{1,j},a_{2,j},\dots,a_{p,j}\}$ by $\supp_j(\mathbf{A})$. Suppose that $\bfB$ is another tableau of the same size. We say that $\supp(\bfA)=\supp(\bfB)$ if  $\supp_j(\bfA)=\supp_j(\bfB)$ for all $j\in [n]$. 
        \item A semi-standard tableaux $\bfA$ is called \emph{standard} if for every semi-standard tableau $\bfB$ with $\supp(\bfA)=\supp(\bfB)$ and $\bfA\neq \bfB$, we have $\bfB>_{\sigma} \bfA$.
    \end{enumerate}
\end{Definition}

\begin{Remark}
    \label{rmk:reductions}
    \begin{enumerate}[a]
        \item If $\bfA$ is a tableau, we can turn it into a semi-standard tableau by swapping the rows. 
        \item \label{rmk:reductions_b}
        Any subtableau obtained from a semi-standard tableau by removing rows or rightmost columns remains semi-standard.
        \item If $\bfA$ is a semi-standard tableau, the set
            \[
                \Set{\bfB : \text{$\bfB$ is semi-standard with $\supp(\bfB)=\supp(\bfA)$}}
            \]
            is finite. Since $\sigma$ is a total order on this set, there exists a unique standard tableau $\bfB$ in it.
    \end{enumerate} 
\end{Remark}

Next, we present an algorithm for finding the standard tableau from a given tableau. It is based on the following simple fact:

\begin{Observation}
    \label{obs_inc}
    Given a chain of elements $\bda^1 \succeq \bda^2 \succeq \cdots \succeq \bda^p$ in some poset and a multiset of integers $B=\{b_1,\dots,b_p\}$, the elements in $B$ can be uniquely sorted so that the following conditions holds:
    \begin{itemize}
        \item if $\bda^i \succ \bda^j$, then $b_i\ge b_j$, and
        \item if $\bda^i=\bda^j$ with $i<j$, then $b_i\le b_j$.
    \end{itemize}
\end{Observation}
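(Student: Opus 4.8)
The plan is to reinterpret the two bulleted requirements as a single monotonicity condition with respect to an auxiliary total order on the index set $[p]$, and then to invoke the uniqueness of monotone rearrangements of a multiset. First I would record the elementary structural fact that, because $\bda^1 \succeq \cdots \succeq \bda^p$ is a chain, any two of the elements $\bda^i,\bda^j$ are comparable, and that equal elements occupy a consecutive block of indices (by antisymmetry applied to $\bda^i \succeq \cdots \succeq \bda^j \succeq \bda^i$). This lets me define a relation $\preceq_{\ast}$ on $[p]$ by declaring $i \preceq_{\ast} j$ exactly when either $\bda^i \prec \bda^j$, or $\bda^i = \bda^j$ and $i \le j$. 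Informally this is the order ``sort positions by poset value ascending, then by index ascending,'' so the index set decomposes into value-blocks, sorted ascending across blocks and by index within each block.

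The key step is to verify that $\preceq_{\ast}$ is a total order on $[p]$ and that it encodes both conditions at once, namely that $i \preceq_{\ast} j$ is equivalent to the demand $b_i \le b_j$. Totality and antisymmetry follow from comparability of the $\bda^i$ together with the index order breaking ties among equal elements, and transitivity is the lexicographic combination of the two. Unwinding the definition, $\bda^i \succ \bda^j$ forces $j \prec_{\ast} i$ and hence the requirement $b_i \ge b_j$, while $\bda^i = \bda^j$ with $i<j$ gives $i \prec_{\ast} j$ and the requirement $b_i \le b_j$; thus the two bullets together are precisely the assertion that $i \mapsto b_i$ is weakly increasing along $\preceq_{\ast}$. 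With this in hand, existence and uniqueness both reduce to a standard observation: writing the positions in $\preceq_{\ast}$-increasing order as $i_1 \prec_{\ast} \cdots \prec_{\ast} i_p$ and the multiset in ascending order as $c_1 \le \cdots \le c_p$, the only weakly increasing placement is $b_{i_k} := c_k$, and this placement does satisfy all the required inequalities.

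The one point demanding care is the uniqueness claim when $B$ contains repeated integers: a priori one might worry that distributing equal values among positions in different ways yields genuinely different tuples. This is not a real obstacle, because the condition pins down the actual integer at each position—the $k$-th position in the $\preceq_{\ast}$-order necessarily receives the $k$-th smallest element of $B$—so the output sequence $(b_1,\dots,b_p)$ is determined independently of which copies are matched. I expect this bookkeeping, rather than any conceptual difficulty, to be the main thing to get right; the remainder is the routine verification that $\preceq_{\ast}$ is a total order and that the monotone rearrangement of a multiset along a totally ordered finite set is unique.
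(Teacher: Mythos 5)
Your proposal is correct and complete. For comparison: the paper offers no proof of this statement at all---it is labelled an Observation, introduced with ``It is based on the following simple fact,'' and used without further justification in Algorithm 4.5 (conditions (Cond-2) and (Cond-3)) and again in the proof of Proposition 4.9, where the authors ``deduce readily'' from it that the ordered multisets $\{a_{1,n},\dots,a_{p,n}\}$ and $\{b_{1,n},\dots,b_{p,n}\}$ are uniquely determined. So there is no argument in the paper to measure yours against, and your write-up supplies exactly the missing verification. Your reduction is the natural one: the crux is the equivalence between the two bulleted conditions and weak monotonicity of $i \mapsto b_i$ along the auxiliary total order $\preceq_{\ast}$, and you checked the nontrivial direction---that \emph{every} pair $i \prec_{\ast} j$ falls under one of the two bullets (either $\bda^i \prec \bda^j$, using comparability along the chain, or $\bda^i = \bda^j$ with $i < j$)---so the bullets genuinely constrain all $\preceq_{\ast}$-comparable pairs, not just some. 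From there, existence and uniqueness correctly reduce to the uniqueness of the ascending rearrangement of a multiset along a totally ordered $p$-element set. You were also right to flag, and resolve, the only real subtlety: uniqueness is uniqueness of the output tuple $(b_1,\dots,b_p)$, not of a matching of repeated copies, and your argument pins down the integer at each position ($b_{i_k} = c_k$) rather than a bijection of copies. This is precisely the form of uniqueness the paper needs later, so your proof would splice into the paper without modification.
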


\begin{Algorithm}
    \label{algo_standard_tab}
    Given a $p\times n$ semi-standard tableau $\bfA=[a_{i,j}]$, we use the following method to find a $p\times n$ standard tableaux $\bfB=[b_{i,j}]$ such that $\Supp(\bfA)=\Supp(\bfB)$.
    \begin{enumerate}[1]
        \item
            Firstly, since $\bfA$ is semi-standard, we have $(a_{1,1})\ge_{\sigma}(a_{2,1})\ge_{\sigma}\cdots \ge_{\sigma}(a_{p,1})$, i.e., $a_{1,1}\le a_{2,1} \le \cdots \le a_{p,1}$. Whence, we set $b_{i,1}=a_{i,1}$ for $i\in [p]$.
        \item Secondly, for $j=1,2,\dots,n$, suppose that the first $j$ columns of $\bfB$ have been determined. For $1\le i\le p$, let $\bdb^i_{j}$ be the vector $(b_{i,1},b_{i,2},\dots,b_{i,j})\in \ZZ_+^j$. Whence, we have $\bdb^1_{j} \ge_{\sigma} \bdb^2_{j} \ge_{\sigma}\cdots \ge_{\sigma} \bdb^p_{j}$.
        \Cref{obs_inc} implies that the multiset $\{a_{1,j+1},a_{2,j+1},\dots,a_{p,j+1}\}$ can be rearranged to $\{b_{1,j+1},b_{2,j+1},\dots,b_{p,j+1}\}$ such that
            \begin{itemize}
                \item if $\bdb^i_{j}>_{\sigma} \bdb^{i'}_{j}$, then $b_{i,j+1}\ge b_{i',j+1}$, and \hfill \textup{(Cond-2)}
                \item if $\bdb^i_{j}=\bdb^{i'}_{j}$ with $i<i'$, then $b_{i,j+1} \le b_{i',j+1}$. \hfill \textup{(Cond-3)}
            \end{itemize}
            Consequently, the $(j+1)$-th column of $\bdB$ 
            is determined. Furthermore, we have vectors $\bdb^1_{j+1},\dots,\bdb^p_{j+1}$, where each $\bdb^i_{j+1}=(b_{i,1},\dots,b_{i,j},b_{i,j+1})\in \ZZ_+^{j+1}$.
            It is clear from \textup{(Cond-2)} and \textup{(Cond-3)} that $\bdb^1_{j+1} \ge_{\sigma} \bdb^2_{j+1} \ge_{\sigma} \cdots \ge_{\sigma} \bdb^p_{j+1}$.
    \end{enumerate}
    After all these computations, we get a semi-standard tableau $\bfB$ with $\supp(\bfA)=\supp(\bfB)$.
\end{Algorithm}

\begin{Example}
    The $12 \times 5$ tableau $\bfA$ in \Cref{tab:standard} is semi-standard, but not standard. We can apply \Cref{algo_standard_tab} to get a standard tableau $\bfB$ with the same support.
    \begin{figure}[h]
        $\bfA=$
        \begin{tabular}{|c|c|c|c|c|}
            \hline
            1&1&1&1&1\\ \hline
            1&1&1&2&1\\ \hline
            1&2&2&2&2\\ \hline
            1&2&2&2&3\\ \hline
            2&2&2&3&4\\ \hline
            2&2&2&4&5\\ \hline
            2&3&3&4&6\\ \hline
            2&3&3&5&6\\ \hline
            2&3&4&6&6\\ \hline
            3&3&5&6&7\\ \hline
            3&4&5&7&7\\ \hline
            3&4&6&7&8\\ \hline
        \end{tabular}$ \qquad \xLongrightarrow{\text{\Cref{algo_standard_tab}}} \qquad \bfB=$
        \begin{tabular}{|c|c|c|c|c|}
            \hline
            1 & 3 & 5 & 7 & 8\\
            \hline
            1 & 3 & 6 & 7 & 7\\
            \hline
            1 & 4 & 4 & 6 & 7\\
            \hline
            1 & 4 & 5 & 6 & 6\\
            \hline
            2 & 2 & 2 & 5 & 6\\
            \hline
            2 & 2 & 3 & 4 & 5\\
            \hline
            2 & 2 & 3 & 4 & 6\\
            \hline
            2 & 3 & 2 & 2 & 4\\
            \hline
            2 & 3 & 2 & 3 & 3\\
            \hline
            3 & 1 & 1 & 2 & 2 \\
            \hline
            3 & 1 & 2 & 2 & 1 \\
            \hline
            3 & 2 & 1 & 1 & 1\\
            \hline
        \end{tabular}
        \caption{Two $12 \times 5$ semi-standard tableaux with the same support}
        \label{tab:standard}     
    \end{figure}
    Notice that, this $\bfB$ is not sorted in the sense of Sturmfels \cite{Sturmfels}, nor standard in the sense of De Negri \cite{DeNegri}.
\end{Example}

Applying \Cref{algo_standard_tab} yields standard tableaux, which are guaranteed to be valid by the following result. 

\begin{Lemma}
    \label{lem:algo_works}
    The tableau $\bfB$ constructed in \Cref{algo_standard_tab} is standard.
\end{Lemma}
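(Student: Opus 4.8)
The plan is to show that $\bfB$ is the unique $\sigma$-minimum among all semi-standard tableaux sharing its support, which by \Cref{orderTuple}(e) is precisely the assertion that $\bfB$ is standard. First I would record the concrete meaning of the order: unwinding \Cref{orderTuple}(a) and (c), one has $\bfB \le_\sigma \bfC$ exactly when, reading entries in row-major order (row by row, and within a row from left to right), at the first position where $\bfB$ and $\bfC$ disagree the entry of $\bfB$ is the larger one. Thus it suffices to take an arbitrary semi-standard $\bfC$ with $\supp(\bfC)=\supp(\bfB)$ and $\bfC\neq\bfB$, let $(k,\ell)$ be the first position in this row-major reading at which they differ, and prove $b_{k,\ell}>c_{k,\ell}$. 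Since \Cref{rmk:reductions}(c) guarantees that the minimum exists and is unique, establishing this inequality for every such $\bfC$ identifies $\bfB$ with it.

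The key reduction is to the top row. Because $(k,\ell)$ is the first point of disagreement, rows $1,\dots,k-1$ coincide in $\bfB$ and $\bfC$, and the entries $b_{k,1},\dots,b_{k,\ell-1}$ coincide with $c_{k,1},\dots,c_{k,\ell-1}$. Deleting the first $k-1$ rows and all columns past the $\ell$-th, I obtain from $\bfB$ and $\bfC$ two sub-tableaux on rows $k,\dots,p$ and columns $1,\dots,\ell$, which are semi-standard by \Cref{rmk:reductions_b}. Since rows $1,\dots,k-1$ are shared, the two sub-tableaux have identical column multisets $M_1,\dots,M_\ell$, and in each the new top row is the old row $k$, carrying the common prefix $\pi=(b_{k,1},\dots,b_{k,\ell-1})$ in its first $\ell-1$ columns. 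A direct check against \textup{(Cond-2)} and \textup{(Cond-3)} shows that the sub-tableau coming from $\bfB$ is still the output of \Cref{algo_standard_tab} for the data $M_1,\dots,M_\ell$, because those conditions hold for every pair of rows of $\bfB$, hence for every pair among rows $k,\dots,p$. Consequently the whole statement reduces to the claim that, among all semi-standard tableaux with prescribed column multisets and a prescribed top-row prefix, the greedy tableau maximizes the entry of the top row in the last column.

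To prove this top-row maximality I would induct on the number of columns $\ell$. The first column of any semi-standard tableau is forced to list $M_1$ in non-decreasing order, so the top block -- the rows whose first coordinate equals $\min M_1$ -- is the same, and in particular has the same size, in every competitor. Restricting greedily to this block and to columns $2,\dots,\ell$ recovers the same situation with one fewer column and an unchanged top row, which drives the induction; the base case $\ell=1$ is immediate, as the top entry is forced to be $\min M_1$. The content of the inductive step is that \Cref{algo_standard_tab} feeds the top block the largest available values in each column (the effect of \textup{(Cond-2)}), while \textup{(Cond-3)} forces the values inside that block to increase downward, so the top-row entry equals the smallest of these largest available values. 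I would then show by an exchange argument -- transporting a value from a lower, larger-prefix row up into the top block -- that no semi-standard competitor can place a strictly larger value in the top position without violating the semi-standard condition \eqref{eqn:cond_1}, using \Cref{obs_inc} to keep each rearrangement canonical.

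I expect the main obstacle to be exactly this monotonicity at the heart of the inductive step. The difficulty is structural: the order defining \emph{standard} reads the tableau row by row, whereas \Cref{algo_standard_tab} builds it column by column, and the two groupings do not align. A competitor is free to rearrange the rows below the top within the earlier columns, which can alter the size of the prefix block attached to the top row and hence the multiset of values that block is forced to carry; the crux is to verify that greedily assigning the largest surviving values to the top block -- rather than to some lower block -- can never be beaten for the top-row entry. Once this is secured for the top row, the reduction above upgrades it to the desired inequality $b_{k,\ell}>c_{k,\ell}$ at a general first point of difference, completing the identification of $\bfB$ with the standard tableau.
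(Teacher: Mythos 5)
Your framing is right: standardness of $\bfB$ is exactly $\sigma$-minimality among semi-standard tableaux with the same support, and at the first row-major disagreement $(k,\ell)$ you must show $b_{k,\ell}>c_{k,\ell}$. The gap is in what you reduce this to. After deleting rows $1,\dots,k-1$ and the columns beyond $\ell$, the competitor $\bfC$ is \emph{not} constrained to agree with $\bfB$ anywhere below the top row in columns $1,\dots,\ell-1$; it only shares the global column multisets and the top-row prefix. Your inductive step then asserts that restricting to the top block of the first column ``recovers the same situation with one fewer column,'' but it does not: the multiset of values that a competitor routes through that block in columns $2,\dots,\ell$ is not prescribed and can differ from greedy's, so the induction hypothesis (fixed column multisets) no longer applies. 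This is precisely the obstacle you flag in your final paragraph, and the proposal leaves it as an unproved ``top-row maximality'' claim with only a sketched exchange argument. The claim is in fact true, but it requires a genuine multiplicity argument: since greedy's top block receives the $s$ largest values of a column and the prescribed prefix pins the top entry to their minimum $v$, the values strictly exceeding $v$ number only $s-t$ (where $t$ is greedy's multiplicity of $v$ in the block), so any competitor's block of size $s$ with minimum $v$ must contain at least $t$ copies of $v$; hence the equal-prefix block can never shrink below greedy's, and the $t$-th order statistic of the next column then caps the competitor's top entry. Nothing of this sort is carried out in the proposal, so as written the proof is incomplete.

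The paper avoids this difficulty entirely by slicing the disagreement column-first rather than row-first: it takes $j_0$ the minimal \emph{column} where $\bfB$ and $\bfC$ differ and $i_0$ the minimal row within that column, then truncates (via \Cref{rmk:reductions}\ref{rmk:reductions_b}) so that $j_0=n$ and $i_0=1$. Then $\bfB$ and $\bfC$ agree on the entire first $n-1$ columns, so the block $i_1=\max\{i:\bdb^i_{n-1}=\bdb^1_{n-1}\}$ of rows sharing the top prefix is \emph{identical} in both tableaux; semi-standardness \eqref{eqn:cond_1} sorts both last-column blocks increasingly, \textup{(Cond-2)} says $\{b_{1,n},\dots,b_{i_1,n}\}$ is the $i_1$-subset of $\supp_n(\bfA)$ with the largest sum, and entrywise domination of order statistics gives $c_{i,n}<b_{i,n}$ at the first difference, hence $\bfC>_\sigma\bfB$ in one step. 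Because \Cref{algo_standard_tab} builds the tableau column by column, the column-major choice of first discrepancy is the one that matches the construction; with your row-major choice the needed statement is substantially harder, and your argument for it has a hole exactly where you predicted it would.
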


\begin{proof}
    Let $\bfC=[c_{i,j}]$ be an arbitrary semi-standard tableau with $\supp(\bfC)=\supp(\bfB)$. If $\bfC\ne \bfB$, we will prove that $\bfC>_{\sigma} \bfB$. Therefore, $\bfB$ is standard.

    Let $j_0=\min\{j: c_{i,j}\ne b_{i,j} \text{ for some $i\in [p]$ and $j\in [n]$}\}$ and $i_0=\min\{i\in [p]:c_{i,j_0}\ne b_{i,j_0}\}$.
    Based on \Cref{rmk:reductions}\ref{rmk:reductions_b}, we can assume that $j_0=n$ and $i_0=1$.

    For each $i$ and $j$, let $\bdc^i_{j}$ be the vector $(c_{i,1},c_{i,2},\dots,c_{i,j})$. Whence, $\bdc^i_{j}=\bdb^i_{j}$ for $i\in [p]$ and $j\in [n-1]$ by the choice of $i_0$ and $j_0$. Let $i_1 \coloneqq \max\{i: \bdb^i_{n-1}=\bdb^1_{n-1}\}$. Consequently, $\bdb^1_{n-1}=\bdb^2_{n-1}=\cdots=\bdb^{i_1}_{n-1}$. 
It follows from \eqref{eqn:cond_1} that $c_{1,n}\le c_{2,n} \le \cdots \le c_{i_1,n}$ and  $b_{1,n}\le b_{2,n}\le \cdots \le b_{i_1,n}$.
Furthermore, (Cond-2) implies that the multiset $\{b_{1,n},b_{2,n},\dots,b_{i_1,n}\}$ is the $i_1$-subset of $\supp_{n}(\bfA)$ with the largest sum. Since $c_{1,n}\ne b_{1,n}$ by the choice of $i_0$ and $j_0$, we have $\{b_{1,n},b_{2,n},\dots,b_{i_1,n}\}\ne \{c_{1,n},c_{2,n},\dots,c_{i_1,n}\}$. This implies that $[\bdc^1_{n}, \ldots,\bdc^{i_1}_{n}] >_{\sigma} [\bdb^{1}_{n},\ldots,\bdb^{i_1}_{n}]$. Since $\bdc^{i}_{j}=\bdb^i_{j}$ for $i\in [p]$ and $j\in [n-1]$, and
    \[
        \bdb^1_{n-1}=\bdb^2_{n-1}=\cdots=\bdb^{i_1}_{n-1}>_{\sigma}\bdb^{i_1+1}_{n-1}\ge_{\sigma} \cdots \ge_{\sigma} \bdb^{p}_{n-1},
    \]
    this implies that $[\bdc^1_{n},\ldots,\bdc^{p}_{n}]>_{\sigma} [ \bdb^{1}_{n}, \ldots, \bdb_{n}^p]$, i.e., $\bfC>_{\sigma} \bfB$. 
\end{proof}

Next, we present a useful tool for verifying the 
standardness
of a given semi-standard tableau.

\begin{Lemma}
    \label{StandardProp}
    Let $\bda$ and $\bdb$ be two tuples in $\ZZ_+^{n}$ such that $\bda\ge_\sigma \bdb$. Then, the semi-standard tableau $[\bda,\bdb]$ is standard if and only if $\bda=\bdb$, or there exists a $k\in [n]$ such that $a_{i}=b_{i}$ for all $i<k$, and $a_k<b_{k}$, but $a_{j}\geq b_j$ for all $k<j\leq n$.
\end{Lemma}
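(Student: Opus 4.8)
The plan is to exploit the fact that $[\bda,\bdb]$ has only two rows, so every semi-standard tableau $\bfB$ sharing its support is obtained by deciding, column by column, which of the two entries $\{a_j,b_j\}$ sits on top. Concretely, I would encode a competitor by the set $S\subseteq[n]$ of columns in which the top and bottom entries are exchanged. Since exchanging a column with $a_j=b_j$ changes nothing, I may always assume $S\subseteq\{j:a_j\neq b_j\}$, and then $\bfB\neq[\bda,\bdb]$ precisely when $S\neq\emptyset$. With this parametrization, standardness of $[\bda,\bdb]$ (in the sense of \Cref{orderTuple}) becomes the assertion that every competitor with $S\neq\emptyset$ that remains semi-standard satisfies $\bfB>_\sigma[\bda,\bdb]$.

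First I would dispose of the case $\bda=\bdb$: every column then has equal entries, there are no nontrivial competitors, and $[\bda,\bdb]$ is vacuously standard, matching the first alternative of the statement. For $\bda\neq\bdb$, the hypothesis $\bda\ge_\sigma\bdb$ already supplies the index $k$ with $a_i=b_i$ for $i<k$ and $a_k<b_k$, so what remains is to show that standardness is equivalent to the trailing condition $a_j\ge b_j$ for all $j>k$.

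The crux is a rigidity observation about which exchanges preserve semi-standardness. Because the columns before $k$ have equal entries and column $k$ has $a_k<b_k$, a competitor $\bfB=[\bda',\bdb']$ is semi-standard \emph{if and only if} $k\notin S$: exchanging column $k$ would place the larger entry $b_k$ on top at the first position where the rows differ, so the first nonzero entry of $\bda'-\bdb'$ would be positive, violating $\bda'\ge_\sigma\bdb'$; keeping column $k$ fixed leaves the first discrepancy at $k$ with $a_k<b_k$, which is admissible regardless of what happens in later columns. Granting this, necessity follows contrapositively: if some $j_0>k$ has $a_{j_0}<b_{j_0}$, then exchanging the single column $\{j_0\}$ yields a semi-standard $\bfB$ whose top row first differs from $\bda$ at $j_0$, where the new entry $b_{j_0}>a_{j_0}$ makes the first nonzero entry of $\bda'-\bda$ positive, hence $\bda'<_\sigma\bda$ and $\bfB<_\sigma[\bda,\bdb]$, contradicting standardness. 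For sufficiency, assume $a_j\ge b_j$ for all $j>k$ and take any semi-standard competitor (so $k\notin S$, $S\neq\emptyset$, $S\subseteq\{j>k:a_j\neq b_j\}$); letting $j_0\coloneqq\min S>k$, the new top entry $b_{j_0}$ is \emph{strictly} less than $a_{j_0}$ (strictness since $a_{j_0}\ge b_{j_0}$ and $a_{j_0}\neq b_{j_0}$), so $\bda'>_\sigma\bda$ and thus $\bfB>_\sigma[\bda,\bdb]$ already at the first row.

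The main obstacle I anticipate is that standardness quantifies over all simultaneous multi-column exchanges, not merely single swaps, so one must control the whole family of competitors at once. The key simplification—which I expect to carry the argument—is the rigidity step: semi-standardness forces column $k$ to remain unexchanged, after which only the \emph{leftmost} exchanged column $j_0$ governs the $\sigma$-comparison of the two top rows, and all other columns become irrelevant. Verifying this reduction carefully (in particular that exchanging column $k$ always destroys semi-standardness, and that every exchange with $k\notin S$ stays semi-standard) is where the real content lies; the remaining sign bookkeeping for the order $\sigma$ is routine.
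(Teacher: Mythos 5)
Your proof is correct and follows essentially the same route as the paper: the rigidity step (semi-standardness forces column $k$ to keep $a_k$ on top) is the paper's observation that $p_k=\min\{p_k,q_k\}=a_k$, your single-column swap at a bad index $j_0>k$ is exactly the paper's counterexample construction, and your ``leftmost exchanged column'' comparison is equivalent to the paper's entrywise bound $p_j\le\max\{a_j,b_j\}=a_j$ yielding $\bdp\ge_\sigma\bda$. The swap-set parametrization by $S$ is a harmless repackaging of the same argument.
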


\begin{proof}
    If $\bda=\bdb$, then for any semi-standard tableau $[\bdp,\bdq]$ with $\supp ([\bdp,\bdq])=\supp([\bda,\bdb])$, we must have $\bdp=\bdq=\bda=\bdb$. Therefore, $[\bda,\bdb]$ is standard. Thus, in the following, we will assume $\bda\neq \bdb$.

    Suppose that there exists a value $k$ such that $a_{i}=b_{i}$ for all $i<k$, and $a_k<b_{k}$ but $  a_{j}\geq b_j$ for all $k<j\leq n$. Then, for any semi-standard tableau $[\bdp,\bdq]$ with $\supp ([\bdp,\bdq])=\supp([\bda,\bdb])$, we have $p_i=a_{i}=b_{i}=q_i$ for all $i<k$. It is clear that $a_k=\min\{a_k,b_k\}=\min\{p_k,q_k\}=p_k$ by \eqref{eqn:cond_1}. Additionally, $a_j = \max\{a_j,b_j\} =\max\{p_j,q_j\}\ge p_j$ for all $k<j\leq n$, by the assumptions on $k$. Therefore, $\bdp \geq_{\sigma} \bda >_{\sigma} \bdb$. Moreover, $\bdp=\bda$ if and only if $\bdq=\bdb$. Thus, $[\bdp,\bdq] \geq_{\sigma} [\bda,\bdb]$, implying that $[\bda,\bdb]$ is standard.

    Conversely, suppose that no such $k$ exists. 
    Since $\bda\ne \bdb$ and $[\bda,\bdb]$ is semi-standard, we have $a_{k'}<b_{k'}$ from \eqref{eqn:cond_1} for $k'\coloneqq \min\{j\in [n]:a_j\ne b_j\}$. Due to the non-existence of such $k$ and the choice of $k'$, we can find $j>k'$ such that $a_j<b_j$. Then, we  consider the tuples $\bdp=(p_1,\ldots, p_{n})$ and $\bdq=(q_1,\ldots, q_{n})$ such that 
    \[
        p_\ell=
        \begin{cases}
            a_{\ell}, & \text{if $\ell\ne j$,} \\
            b_j, & \text{if $\ell=j$,}
        \end{cases}
        \qquad \text{and} \qquad
        q_\ell=
        \begin{cases}
            b_{\ell}, & \text{if $\ell\ne j$,} \\
            a_j, & \text{if $\ell=j$.}
        \end{cases}
    \]
    It is clear that $\bda>_{\sigma}\bdp \ge_{\sigma} \bdq$ and $\supp ([\bdp,\bdq])=\supp([\bda,\bdb])$. 
    This indicates that $[\bda,\bdb]$ is not standard.
\end{proof}

\begin{Proposition}
    \label{StandardExist}
     A semi-standard $p\times n$ tableau $\bfA=[\bda^1,\dots,\bda^p]$ is standard if and only if $[\bda^h, \bda^{k}]$ is standard for every $1 \leq h < k \leq p$.
\end{Proposition}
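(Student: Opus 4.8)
The plan is to sidestep the global minimization definition of standardness and replace it by the purely local, column-by-column monotonicity conditions that \Cref{algo_standard_tab} imposes. For a semi-standard tableau $\bfA=[a_{i,j}]$ write $\bda^i_j\coloneqq(a_{i,1},\dots,a_{i,j})$ for the length-$j$ prefix of its $i$-th row. I claim that $\bfA$ is standard if and only if it satisfies, for every $j\in[n-1]$ and all $i,i'\in[p]$,
\[
    \text{if } \bda^i_j>_\sigma \bda^{i'}_j,\ \text{then}\ a_{i,j+1}\ge a_{i',j+1}.
\]
This is exactly condition \textup{(Cond-2)} of \Cref{algo_standard_tab}; its companion \textup{(Cond-3)} is automatic for any semi-standard tableau, since equal prefixes $\bda^i_j=\bda^{i'}_j$ with $i<i'$ force $a_{i,j+1}\le a_{i',j+1}$ directly from \eqref{eqn:cond_1} (otherwise $\bda^i<_\sigma\bda^{i'}$, contradicting semi-standardness).

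To prove this characterization I would argue both implications through \Cref{algo_standard_tab}. If $\bfA$ is standard, then \Cref{lem:algo_works} shows the algorithm applied to $\bfA$ returns a standard tableau of the same support, and \Cref{rmk:reductions} guarantees such a tableau is unique; hence $\bfA$ coincides with the algorithm's output and so satisfies \textup{(Cond-2)} by construction. Conversely, if $\bfA$ satisfies \textup{(Cond-2)} (and therefore also \textup{(Cond-3)}), I would show by induction on the column index that $\bfA$ agrees column-by-column with the algorithm's output $\bfB$ on input $\bfA$: the first columns coincide trivially, and at each step the $(j+1)$-st columns of $\bfA$ and of $\bfB$ are rearrangements of the common multiset $\supp_{j+1}(\bfA)$ satisfying \textup{(Cond-2)} and \textup{(Cond-3)} relative to the identical prefixes $\bda^i_j=\bdb^i_j$; the uniqueness asserted in \Cref{obs_inc} then forces the two columns to be equal. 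Thus $\bfA=\bfB$ is standard.

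With the characterization in hand, the proposition follows by matching \textup{(Cond-2)} against the pairwise criterion of \Cref{StandardProp}. Fix $h<k$; if $\bda^h=\bda^k$ the pair is trivially standard, so let $c$ be the first column where the two rows differ. Semi-standardness gives $a_{h,c}<a_{k,c}$, whence $\bda^h_j>_\sigma\bda^k_j$ holds precisely for $j\ge c$. Therefore \textup{(Cond-2)}, read off for the index pair $(h,k)$, asserts exactly that $a_{h,j+1}\ge a_{k,j+1}$ for all $j\ge c$, i.e.\ $a_{h,\ell}\ge a_{k,\ell}$ for all $\ell>c$, which by \Cref{StandardProp} is equivalent to $[\bda^h,\bda^k]$ being standard. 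Running this in both directions finishes the proof: if every pair is standard, then for any $i,i',j$ with $\bda^i_j>_\sigma\bda^{i'}_j$ the first difference at some $c\le j$ shows $\bda^i$ is $\sigma$-larger, hence $i<i'$, and pairwise standardness of $[\bda^i,\bda^{i'}]$ supplies $a_{i,j+1}\ge a_{i',j+1}$, so \textup{(Cond-2)} holds and $\bfA$ is standard; and if $\bfA$ is standard, \textup{(Cond-2)} directly yields standardness of every pair.

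The crux of the argument is the characterization of the second paragraph, and within it the implication ``\textup{(Cond-2)} $\Rightarrow$ standard'', whose honest justification is the uniqueness extracted from \Cref{obs_inc}. I expect this to be the only delicate point. A seemingly more elementary route --- take a non-standard pair, exchange a single coordinate to lower one row in $\sigma$-order, and re-sort the rows to contradict minimality --- is derailed by the bookkeeping around repeated rows, since lowering one row while simultaneously raising another need not lower the re-sorted tableau at the first position of change. Channeling everything through the local conditions of \Cref{algo_standard_tab} avoids this difficulty entirely.
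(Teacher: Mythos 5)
Your proof is correct and takes essentially the same route as the paper's: both arguments reduce to showing that a tableau satisfying the pairwise condition coincides with the output of \Cref{algo_standard_tab}, using the uniqueness in \Cref{obs_inc} together with \Cref{StandardProp}, \Cref{lem:algo_works}, and the uniqueness of the standard tableau with given support. Your only deviation is organizational: you factor the argument through the explicit local characterization ``standard $\Leftrightarrow$ \textup{(Cond-2)}'' (with \textup{(Cond-3)} observed to be automatic for semi-standard tableaux) and induct on columns against the algorithm's output, whereas the paper inducts on $n$ using pairwise standardness of the $(n-1)$-column truncations --- the same computation in slightly different packaging.
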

\begin{proof}
    Let $\bfB=[\bdb^1, \ldots, \bdb^{p}]$ be the tableau constructed in \Cref{algo_standard_tab} from $\bfA$. According to \Cref{lem:algo_works}, $\bfB$ is standard. Furthermore, it follows from the construction of $\bfB$ in \Cref{algo_standard_tab} and \Cref{StandardProp} that $[\bdb^h, \bdb^{k}]$ is standard for every $1 \leq h < k \leq p$.
    \begin{enumerate}[a]
        \item Suppose that $\bfA$ is standard. It follows from the uniqueness of standard tableau that $\bfA=\bfB$. Consequently, $[\bda^h, \bda^{k}]$ is standard for every $1 \leq h < k \leq p$.

        \item Conversely, suppose that $[\bda^h, \bda^{k}]$ is standard for every $1 \leq h < k \leq p$. We prove that $\bfA$ is standard. For this purpose, in the following, we prove by induction on $n$ that $\bda^i=\bdb^i$ for each $i$.  Suppose that the $(i,j)$-entries of the semi-standard tableaux $\bfA$ and $\bfB$ are $a_{i,j}$ and $b_{i,j}$, respectively.

            Notice that the base where $n=1$ is clear, since $a_{1,1}\le a_{2,1}\le \cdots\le a_{p,1}$ and $b_{1,1}\le b_{2,1}\le \cdots \le b_{p,1}$
            from \eqref{eqn:cond_1}.
            Now, consider the general case. For 
            $i\in [p]$ and $j\in [n]$,
            let ${\bda}^i_{j}=({a}_{i,1},\dots,{a}_{i,j})$ and $\bdb^i_{j}=(b_{i,1},\dots,b_{i,j})$, by abuse of notation. It follows from \Cref{StandardProp} that both $[\bda^h_{n-1},\bda^k_{n-1}]$ and $[\bdb^h_{n-1},\bdb^k_{n-1}]$ are standard for $1\le h \le k \le p$. Furthermore, it is clear that $\supp([\bda^1_{n-1},\bda^2_{n-1},\dots,\bda^p_{n-1}])= \supp([\bdb^1_{n-1},\bdb^2_{n-1},\dots,\bdb^{p}_{n-1}])$. Hence, by the induction hypothesis for the $n-1$ case, we know that $\bda^i_{n-1}=\bdb^i_{n-1}$ for each $i$.

            Moreover, for $1\le h< k \le p$, since $[{\bda}^h_{n}={\bda}^h, {\bda}^k_{n}={\bda}^k]$ and $[{\bdb}^h_{n}={\bdb}^h, {\bdb}^k_{n}={\bdb}^k]$ are standard, we have the following observations from \Cref{StandardProp}:
            \begin{itemize}
                \item if $\bda^h_{n-1}=\bdb^h_{n-1}>_{\sigma} \bda^k_{n-1}=\bdb^k_{n-1}$, then ${a}_{h,n}\ge {a}_{k,n}$ and $b_{h,n}\ge b_{k,n}$;
                \item if $\bda^h_{n-1}=\bdb^h_{n-1}=\bda^k_{n-1}=\bdb^k_{n-1}$, then ${a}_{h,n} \le {a}_{k,n}$ and $b_{h,n}\le b_{k,n}$.
            \end{itemize}
            Since $\sigma$ is a total order, we deduce readily from \Cref{obs_inc} that the ordered multiset $\{a_{1,n},a_{2,n},\dots,a_{p,n}\}$ and $\{b_{1,n},b_{2,n},\dots,b_{p,n}\}$ are uniquely determined. Since $\supp_{n}(\bfA)=\supp_{n}(\bfB)$, we have $a_{i,n}=b_{i,n}$ for each $i$. In short, $\bda^i_{n}=\bdb^i_{n}$ for each $i$.

            Consequently, $\bfA=\bfB$. In particular, $\bfA$ is standard.
            \qedhere
    \end{enumerate}
\end{proof}

\begin{Corollary}
    \label{cor:sub_standard} 
    Any subtableau obtained from a standard tableau by removing rows or rightmost columns remains standard.
\end{Corollary}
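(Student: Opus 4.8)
The plan is to reduce everything to the two-row case via \Cref{StandardExist}, and then invoke the explicit characterization in \Cref{StandardProp}. First, by \Cref{rmk:reductions}\ref{rmk:reductions_b}, any subtableau $\bfA'$ obtained from a standard tableau $\bfA$ by deleting rows or rightmost columns is automatically semi-standard; hence it only remains to verify that $\bfA'$ is \emph{standard}. By \Cref{StandardExist}, standardness of a semi-standard tableau is equivalent to the standardness of all of its two-row subtableaux $[\bda^h,\bda^k]$ with $h<k$, so the whole problem collapses to understanding how the two deletion operations affect two-row standard tableaux.

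For the removal of rows there is essentially nothing to do: if $\bfA'$ retains the rows indexed by $i_1<\cdots<i_s$, then every two-row subtableau of $\bfA'$ has the form $[\bda^{i_a},\bda^{i_b}]$ with $i_a<i_b$, which is already a two-row subtableau of $\bfA$ and therefore standard by \Cref{StandardExist} applied to $\bfA$. Applying the converse direction of \Cref{StandardExist} to $\bfA'$ then shows $\bfA'$ is standard.

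The genuinely substantive step, and the one I expect to require the most care, is the removal of rightmost columns, which I would handle through \Cref{StandardProp}. Suppose $\bfA$ is $p\times n$ and we keep only the first $n'$ columns; write $\bda^i_{n'}=(a_{i,1},\dots,a_{i,n'})$ for the truncated rows. Since extracting the rows $h,k$ from the column-truncation of $\bfA$ yields the same tableau as first extracting $[\bda^h,\bda^k]$ and then truncating its columns, it suffices to show that whenever $[\bda,\bdb]$ is a standard two-row tableau its truncation $[\bda_{n'},\bdb_{n'}]$ is again standard. I would argue from \Cref{StandardProp}: if $\bda=\bdb$ the truncations coincide and we are done; otherwise let $k$ be the index provided by \Cref{StandardProp}, so that $a_i=b_i$ for $i<k$, $a_k<b_k$, and $a_j\ge b_j$ for $k<j\le n$. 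If $k>n'$ then $\bda_{n'}=\bdb_{n'}$, so the truncation is standard; if $k\le n'$ then the same $k$ witnesses the standardness of $[\bda_{n'},\bdb_{n'}]$, since all the defining (in)equalities for indices up to $n'$ are inherited verbatim.

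Combining the two cases with \Cref{StandardExist} finishes the argument: every two-row subtableau of the truncated tableau is the truncation of a standard two-row subtableau of $\bfA$, hence standard, so the truncated tableau is standard. The only delicate point is the bookkeeping in the column case — checking that the pivot index $k$ either disappears (forcing equality of the truncated rows) or survives with all its inequalities intact — but this is precisely what the clean form of \Cref{StandardProp} is designed to make transparent.
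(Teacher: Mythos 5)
Your proof is correct and takes essentially the route the paper intends: the paper states \Cref{cor:sub_standard} without an explicit proof, as an immediate consequence of the two-row characterization in \Cref{StandardExist} together with the pivot-index criterion of \Cref{StandardProp} (with \Cref{rmk:reductions} supplying semi-standardness of the subtableau), which is exactly your reduction plus the case analysis on whether the pivot $k$ survives the column truncation. Your bookkeeping in the column case (pivot beyond $n'$ forces equal truncated rows; pivot at most $n'$ inherits all the defining inequalities) is accurate, so nothing further is needed.
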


\section{Multi-Rees algebras of standardizable Ferrers diagrams}

Since we have gathered all the necessary tools, we establish the Gr\"{o}bner bases of the presentation ideals of multi-Rees algebras and their special fibers in this section. 

Let $\calD$ be an $n$-dimensional Ferrers diagram as in \Cref{nF}. We begin by defining a monomial order on the base rings $R=\KK[\bdx]= \KK[x_{i,j} : i\in [n], j\in [m]]$, $T= \KK[\bdT]= \KK[T_{\bda}:\bda\in \calD$, and $S=R[\bdT]=R[T_\bda:\bda\in \calD]$.

\begin{Definition}
    \label{order}
    \begin{enumerate}[a]
        \item The variables in $R$ are naturally ordered: $x_{i,j}>x_{i',j'}$ if and only if $i<i'$ or $i=i'$ and $j<j'$. We apply the lexicographic order to $R$ based on this natural order.

        \item For any vectors $\bda$ and $\bdb$ in $\calD $, we define $T_{\bda} > T_{\bdb}$ if $\bda >_{\sigma} \bdb$ according to \Cref{orderTuple}. 
            The induced lexicographic order on the polynomial ring $T$ from this order of the indeterminates is also denoted by ${\sigma}$, by abuse of notation.

        \item As in the proof of \Cref{thm:ReesIdeal}, let $\sigma'$ denote the product order of the lexicographic order $R$, with the order $\sigma$ on $T$.

        \item When working with a  $p\times n$  semi-standard tableau $\bfA=[\bda^1,\dots,\bda^p]$ with row vectors in $\calD$, the monomial $T_{\bda^1}T_{\bda^2}\cdots T_{\bda^p}$ in the polynomial ring $T$ is denoted by $\bdT_\bfA$. 
    \end{enumerate}
\end{Definition}

\begin{Remark}
    If two $p\times n$ semi-standard tableaux, $\bfA$ and $\bfB$, have row vectors in $\calD$, then $\bdT_{\bfA}>_{\sigma}\bdT_{\bfB}$ if and only if $\bfA>_{\sigma}\bfB$. This implies that if $\supp(\bfA)=\supp(\bfB)$ and $\bfB$ is standard, then the leading term of $\bdT_\bfA-\bdT_\bfB$ is $\bdT_\bfA$.
\end{Remark}

\begin{Notation}
    \label{tildeD}
    Let $\calD$ be an $n$-dimensional Ferrers diagram and $r$ be a positive integer. By the loose restriction on the integer $m$ in \Cref{nF}, without loss of generality, we may assume that $r\le m$. We will denote the Cartesian product $\calD\times [r]$ by $\widetilde{\calD_r}$. It is clear that this is an $(n+1)$-dimensional Ferrers diagram. Furthermore, if $R_{\calD}\coloneqq \KK[x_{i,j} : i\in [n], j\in [m]]$ is the base ring for considering the Ferrers ideal $I_{\calD}$, then we may choose $R_{\widetilde{\calD_r}}\coloneqq R_{\calD}[x_{n+1,1},\dots,x_{n+1,m}]$ for considering the Ferrers ideal $I_{\widetilde{\calD_r}}$. 
\end{Notation}

\begin{Remark}
    \label{extendtoN+1}
    We have a natural isomorphism $\calF(\bigoplus_{i=1}^{r}I_{ \calD}) \cong \calF(I_{ \widetilde{\calD_r}})$ of special fiber rings, which is clear from the
    equality of the ideal $I_{ \widetilde{\calD_r}}$ with the product ideal $(x_{n+1,1},\dots,x_{n+1,r})I_{\calD}$ in $R_{\widetilde{\calD_r}}$.
\end{Remark}

The proposition below is analogous to Proposition 2.11 in \cite{DeNegri}.

\begin{Proposition}
    \label{FiberofOneGen}
    Let $\calD$ be a rectangular $n$-dimensional Ferrers diagram,  $r$ be a positive integer, and $\widetilde{\calD_r}$ be the $(n+1)$-dimensional Ferrers diagram associated with $\calD$. Then the set $\calG\coloneqq \bfI(\widetilde{\calD_r})$ defined in \Cref{2-minors} is a Gr\"{o}bner basis of the presentation ideal $\calK$ of the special fiber $\calF(\bigoplus_{i=1}^{r}I_{\calD})$ with respect to the monomial order $\sigma$ defined in \Cref{order}.
\end{Proposition}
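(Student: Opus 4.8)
The plan is to prove the equivalent statement $\ini_\sigma(\calK)=\ini_\sigma(\bfI(\widetilde{\calD_r}))$ by a standard-monomial argument, combining Macaulay's basis theorem with the tableau combinatorics of the previous section. Throughout I identify, via \Cref{extendtoN+1}, the special fiber $\calF(\bigoplus_{i=1}^{r}I_\calD)$ with the semigroup ring $\KK[\bdx_\bda:\bda\in\widetilde{\calD_r}]$, so that $\psi(T_\bda)=\bdx_\bda$.

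First I would record the containment $\bfI(\widetilde{\calD_r})\subseteq\calK$. For any $H\subseteq[n+1]$ and $\bda,\bdb\in\widetilde{\calD_r}$, swapping the coordinates indexed by $H$ merely permutes the factors of the monomial $\bdx_\bda\bdx_\bdb=\prod_i x_{i,a_i}x_{i,b_i}$, so $\psi(\bfI_H(\bda,\bdb))=0$. It is precisely the rectangularity of $\widetilde{\calD_r}$ that guarantees that the interchanged points $\bdp,\bdq$ always lie in the diagram, so that every coordinate swap genuinely produces a relation in $\calK$.

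Next---the heart of the argument---I would identify the standard monomials. Every degree-$p$ monomial of $T$ equals $\bdT_\bfA$ for a unique semi-standard $p\times(n+1)$ tableau $\bfA$ with rows in $\widetilde{\calD_r}$ (sort the rows by $\sigma$). I claim $\bdT_\bfA\notin\ini_\sigma(\bfI(\widetilde{\calD_r}))$ exactly when $\bfA$ is standard. For the forward direction, if $\bfA$ is not standard then \Cref{StandardExist} furnishes two rows $\bda^h\ge_\sigma\bda^k$ ($h<k$) whose $2\times(n+1)$ subtableau is not standard; the converse part of the proof of \Cref{StandardProp} then produces a single coordinate $j$ to swap so that $\bda^h>_\sigma\bdp\ge_\sigma\bdq$, whence $\bfI_{\{j\}}(\bda^h,\bda^k)$ has leading term $T_{\bda^h}T_{\bda^k}$, which divides $\bdT_\bfA$. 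Conversely, if $\bdT_\bfA$ is divisible by the leading term $T_\bda T_\bdb$ of some interchange, then $[\bda,\bdb]$ is a non-standard two-row subtableau of $\bfA$, so $\bfA$ is not standard by \Cref{StandardExist}. Thus the monomials outside $\ini_\sigma(\bfI(\widetilde{\calD_r}))$ are exactly $\Set{\bdT_\bfB:\bfB\text{ standard}}$. I would then observe that these are linearly independent modulo $\calK$: since $\psi(\bdT_\bfB)=\prod_{j}\prod_{\ell}x_{j,\ell}^{m_{j,\ell}}$, where $m_{j,\ell}$ is the multiplicity of $\ell$ in $\supp_j(\bfB)$, the monomial $\psi(\bdT_\bfB)$ determines and is determined by $\supp(\bfB)$, and by \Cref{rmk:reductions} each support class contains a unique standard tableau; hence distinct standard tableaux yield distinct monomials of $\calF$, which are therefore $\KK$-linearly independent.

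Finally I would assemble these facts with elementary linear algebra. By Macaulay's theorem (see \cite{EH}) the monomials outside $\ini_\sigma(\calK)$ form a $\KK$-basis of $T/\calK$, and since $\bfI(\widetilde{\calD_r})\subseteq\calK$ forces $\ini_\sigma(\bfI(\widetilde{\calD_r}))\subseteq\ini_\sigma(\calK)$, this basis is contained in the set of standard monomials. As the standard monomials are linearly independent in $T/\calK$ by the previous step, and a linearly independent set containing a basis must equal that basis, the two monomial sets coincide; therefore $\ini_\sigma(\bfI(\widetilde{\calD_r}))=\ini_\sigma(\calK)$, which is exactly the assertion that $\bfI(\widetilde{\calD_r})$ is a Gr\"obner basis of $\calK$. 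The step I expect to be the main obstacle is the leading-term bookkeeping in the third paragraph: one must check that the single-coordinate interchange chosen to repair a non-standard two-row subtableau really has leading monomial equal to the intended divisor of $\bdT_\bfA$ under the product order $\sigma$, and that rectangularity legitimizes precisely that swap. Everything else is either a direct monomial computation or an appeal to the cited tableau results.
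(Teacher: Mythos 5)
Your proof is correct, and it assembles the ingredients by a genuinely different route than the paper. The paper's proof reduces the Gr\"obner-basis claim to a divisibility check on a $\KK$-spanning set of binomials: by \cite[Lemma 4.1]{Sturmfels}, the ideal $\calK$ is spanned as a $\KK$-vector space by all differences $\bdT_\bfA-\bdT_\bfB$ of equal-support semi-standard tableaux, and for each such binomial with $\bfA$ non-standard it invokes \Cref{StandardExist} together with \Cref{algo_standard_tab} to produce an interchange in $\calG$ whose leading term $T_{\bda^h}T_{\bda^k}$ divides $\bdT_\bfA$, rectangularity guaranteeing that the standardized pair stays in $\widetilde{\calD_r}$. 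You instead run a standard-monomial count: you characterize the monomials outside the ideal generated by the leading terms of $\calG$ as exactly the $\bdT_\bfB$ with $\bfB$ standard, show these are linearly independent modulo $\calK$ because $\psi(\bdT_\bfB)$ determines and is determined by $\supp(\bfB)$ while each support class contains a unique standard tableau (\Cref{rmk:reductions}), and close with Macaulay's basis theorem. The combinatorial heart is the same in both arguments (\Cref{StandardExist} plus a repair of a non-standard two-row subtableau), but your repair uses the single-column transposition extracted from the converse half of the proof of \Cref{StandardProp} rather than the full standardization of \Cref{algo_standard_tab}; the leading-term bookkeeping you flagged as the main obstacle does check out, since with $k'$ the first column where $\bda^h$ and $\bda^k$ differ one gets $\bda^h>_\sigma\bdp$ and $\bda^h>_\sigma\bdq$, so $T_{\bda^h}$ is the strictly largest variable occurring and $\ini_\sigma\bigl(\bfI_{\{j\}}(\bda^h,\bda^k)\bigr)=T_{\bda^h}T_{\bda^k}$ as you intended, while rectangularity legitimizes exactly that swap (note, though, that the containment $\bfI(\widetilde{\calD_r})\subseteq\calK$ in your second paragraph needs no rectangularity at all, since by \Cref{2-minors} the set $\bfI(\widetilde{\calD_r})$ already excludes swaps leaving the diagram; rectangularity is used only in the repair step). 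As for what each approach buys: the paper's argument is shorter because Sturmfels's lemma lets it bypass any linear-independence considerations, whereas yours avoids citing that lemma and yields as a by-product the explicit $\KK$-basis of $\calF(\bigoplus_{i=1}^{r}I_{\calD})$ indexed by standard tableaux, at the modest cost of the injectivity-of-support step and the appeal to Macaulay's theorem.
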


\begin{proof}
    By \Cref{extendtoN+1}, we need to show  $\calG\coloneqq \bfI(\widetilde{\calD_r})$ is a Gr\"{o}bner basis of the presentation ideal $\calK$ of the special fiber ring $\calF(I_{\widetilde{\calD_r}})$. By \cite[Lemma 4.1]{Sturmfels}, the set
    \begin{align*}
        \calG^\dag\coloneqq \{\bdT_{\bfA}-\bdT_{\bfB} &: \text{ $\bfA$ and $\bfB$ are two semi-standard tableaux with}\\
        & \quad \text{ row vectors in $\widetilde{\calD_r}$ and $\supp(\bfA)=\supp(\bfB)$}\}
    \end{align*}
    is a generating set of the presentation ideal of $\calF(I_{\widetilde{\calD_r}})$ as a $\KK$-vector space. Since $\calG\subseteq \calG^\dag$, it suffices to show that $\ini_\sigma(f)\in (\ini_\sigma(g):g\in \calG)$ for every $f\in\calG^\dag$. Without loss of generality, suppose that $f=\bdT_{\bfA}-\bdT_{\bfB}$ with $\text{supp}(\bfA)=\text{supp}(\bfB)$ and $\ini_{\sigma }(f)=\bdT_{\bfA}$. Furthermore, assume that $\bfA=[\bda^1,\dots,\bda^p]$.
    \begin{enumerate}[a]
        \item If $\bfA$ is not standard, by \Cref{StandardExist}, there exist $1\le h<k\le p$ such that the $2$-row semi-standard tableau $[\bda^h,\bda^k]$ is not standard. Applying \Cref{algo_standard_tab}, we can find a standard tableau $[\bdp,\bdq]$ from $[\bda^h,\bda^k]$. Since $\widetilde{\calD_r}$ is also a rectangular diagram, $\bdp,\bdq\in \widetilde{\calD_r}$. Whence, the binomial $f'\coloneqq T_{\bda^h}T_{\bda^k}-T_{\bdp}T_{\bdq}$ belongs to $\calG$. Notice that $\ini_\sigma(f')=T_{\bda^h}T_{\bda^k}$, which divides $\ini_\sigma(f)=\bdT_{\bfA}$.
            Therefore, $\ini_\sigma(f) \in (\ini_\sigma(g):g\in \calG)$.
        \item If $\bfA$ is standard, then $\bfB \geq_\sigma \bfA$. Since $\ini_\sigma(f)=T_\bfA$, this implies that $T_\bfB=T_\bfA$ and $f=0$. This case is trivial. \qedhere
    \end{enumerate}
\end{proof}

The previous proof relies heavily on the existence of the standard tableau in the case of rectangular diagrams. The following lemma demonstrates that this existence can be extended to a standardizable Ferrers diagram.

\begin{Lemma}
    \label{LexClosed}
    Let $\calD$ be an $n$-dimensional Ferrers diagram that is standardizable.
    \begin{enumerate}[a]
        \item  Let $[\bda,\bdb]$ and $[\bdp,\bdq]$ be two distinct $2\times n$ semi-standard tableaux such that $\supp ([\bdp,\bdq])=\supp([\bda,\bdb])$. If $[\bdp, \bdq]$ is standard and $\bda,\bdb\in \calD$, then $\bdp,\bdq\in \calD$.
        \item  Let $[\tilde{\bda},\tilde{\bdb}]$ and $[\tilde{\bdp},\tilde{\bdq}]$ be two distinct $2\times (n+1)$ semi-standard tableaux such that $\supp ([\tilde{\bdp},\tilde{\bdq}])=\supp([\tilde{\bda},\tilde{\bdb}])$. If $[\tilde{\bdp}, \tilde{\bdq}]$ is standard and $\tilde{\bda},\tilde{\bdb}\in \widetilde{\calD_r}$, then $\tilde{\bdp},\tilde{\bdq}\in \widetilde{\calD_r}$.
    \end{enumerate}
\end{Lemma}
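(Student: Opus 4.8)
The plan is to unwind the two standardness characterizations already established and to recognize the rows of the standard tableau $[\bdp,\bdq]$ as the lattice points prescribed by standardizability. For part (a), I would first dispose of the trivial case $\bda=\bdb$, which forces $\bdp=\bdq=\bda$ by the support condition. Otherwise, the semi-standardness of $[\bda,\bdb]$ means $\bda>_\sigma\bdb$, so by the definition of $\sigma$ in \Cref{orderTuple} there is a first coordinate $k$ with $a_i=b_i$ for $i<k$ and $a_k<b_k$. This is exactly the data fed into \Cref{def:SDP}.

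Next I would extract the shape of $[\bdp,\bdq]$. Since it is standard, hence semi-standard with $\bdp\ge_\sigma\bdq$, \Cref{StandardProp} supplies an index $k'$ with $p_i=q_i$ for $i<k'$, with $p_{k'}<q_{k'}$, and with $p_j\ge q_j$ for $j>k'$. The hypothesis $\supp([\bdp,\bdq])=\supp([\bda,\bdb])$ says $\{p_j,q_j\}=\{a_j,b_j\}$ as multisets in each coordinate $j$. Comparing the first coordinate in which each pair splits gives $k'=k$: for $i<k'$ equality $p_i=q_i$ forces $a_i=b_i$, so $k\ge k'$, and for $i<k$ equality $a_i=b_i$ forces $p_i=q_i$, so $k'\ge k$. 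Together with the inequalities this pins down both rows completely: $p_i=q_i=a_i=b_i$ for $i<k$; $p_k=\min\{a_k,b_k\}=a_k$ and $q_k=\max\{a_k,b_k\}=b_k$; and $p_j=\max\{a_j,b_j\}$, $q_j=\min\{a_j,b_j\}$ for $j>k$.

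The conclusion then splits according to the two rows. For $\bdq$ I would observe that $q_j\le b_j$ in every coordinate, so $\bdq\le\bdb$ entrywise, whence $\bdq\in\calD$ because a Ferrers diagram is closed under decreasing coordinates (\Cref{nF}). For $\bdp$, the explicit description above reads precisely $\bdp=(a_1,\dots,a_k,\max\{a_{k+1},b_{k+1}\},\dots,\max\{a_n,b_n\})$, which is exactly the lattice point guaranteed to belong to $\calD$ by standardizability (\Cref{def:SDP}). Recognizing $\bdp$ as this standardizability witness is the one step that genuinely uses the hypothesis, and it is where the min/max bookkeeping must be carried out carefully; this is the main obstacle, while everything else is forced by the two characterizations.

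For part (b), I would reduce to part (a) by first checking that the $(n+1)$-dimensional diagram $\widetilde{\calD_r}=\calD\times[r]$ is itself standardizable. Given distinct $\tilde\bda,\tilde\bdb\in\widetilde{\calD_r}$ first differing at a coordinate $k$, if $k\le n$ the required witness is $(\bdp,\max\{a_{n+1},b_{n+1}\})$, where $\bdp\in\calD$ comes from standardizability of $\calD$ and $\max\{a_{n+1},b_{n+1}\}\in[r]$ since both last coordinates lie in $[r]$; if $k=n+1$ the witness is simply $\tilde\bda$. Having verified that $\widetilde{\calD_r}$ is standardizable, I would invoke part (a) in dimension $n+1$ with $\calD$ replaced by $\widetilde{\calD_r}$, which yields part (b) verbatim.
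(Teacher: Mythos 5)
Your proposal is correct and follows essentially the same route as the paper's proof: you pin down $[\bdp,\bdq]$ via \Cref{StandardProp} (your $k'=k$ step is just a more explicit unwinding of what the paper asserts directly), recognize $\bdp$ as the standardizability witness of \Cref{def:SDP}, get $\bdq\in\calD$ from $\bdq\le\bdb$ entrywise and the Ferrers property, and reduce (b) to (a) by noting $\widetilde{\calD_r}$ is standardizable. Your explicit case check that $\calD\times[r]$ inherits standardizability merely fills in a detail the paper dismisses as clear, so there is no substantive difference.
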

\begin{proof}
    \begin{enumerate}[a]
        \item By the uniqueness of standard tableau, we know $[\bda,\bdb]$ is not standard.  In particular, $\bda\ne \bdb$. But since $[\bda,\bdb]$ is semi-standard, there is a $k$ such that $a_i=b_i$ for $i<k$, and $a_k<b_k$. This implies that $a_i=p_i=q_i=b_i$ for $i<k$ and $a_k=p_k<q_k=b_k$. Since $[\bdp,\bdq]$ is standard, by \Cref{StandardProp}, we must have $p_j=\max\{a_j,b_j\} \ge q_j=\min\{a_j,b_j\}$ for $j>k$. 
            Since $\calD$ is standardizable and $\bda,\bdb\in \calD$, we obtain $\bdp\in \calD$ by \Cref{def:SDP}. Notice that $q_j\leq b_j$ for each $j$.
            Since $\bdb$ is an element of $\calD$, we can conclude that $\bdq$ is also an element of $\calD$ due to the Ferrers property of $\calD$.

        \item 
            Since $\calD$ is standardizable, it is clear that $\widetilde{\calD_r}$
            is also standardizable. Thus, it remains to apply the previous part with respect to $\widetilde{\calD_r}$.
            \qedhere
    \end{enumerate}
\end{proof}

We are now ready to state one of the main theorems of this work.

\begin{Theorem}
    \label{FiberD}
    Let $\calD$ be a standardizable $n$-dimensional Ferrers diagram, $I_{\calD}$ be the ideal associated with $\calD$, and $r$ be a positive integer. Then the set $\calG\coloneqq \bfI(\widetilde{\calD_r})$ defined in \Cref{2-minors} is a Gr\"{o}bner basis of the presentation ideal $\calK$ of the special fiber $\calF(\bigoplus_{i=1}^{r}I_{ \calD})$. In particular, $\calF(\bigoplus_{i=1}^{r} I_{\calD})$ is Koszul.
\end{Theorem}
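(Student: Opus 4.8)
The plan is to imitate the proof of \Cref{FiberofOneGen} step for step, replacing the use of rectangularity there by the standardizability hypothesis, which is exactly what \Cref{LexClosed} is designed to supply. First I would invoke \Cref{extendtoN+1} to replace $\calF(\bigoplus_{i=1}^r I_\calD)$ by the isomorphic ring $\calF(I_{\widetilde{\calD_r}})$, so that it suffices to show $\calG = \bfI(\widetilde{\calD_r})$ is a Gr\"obner basis for the presentation ideal $\calK$ of $\calF(I_{\widetilde{\calD_r}})$ with respect to $\sigma$. Then, by \cite[Lemma 4.1]{Sturmfels}, the set $\calG^\dag$ of all binomials $\bdT_\bfA - \bdT_\bfB$ with $\bfA,\bfB$ semi-standard, having row vectors in $\widetilde{\calD_r}$ and $\supp(\bfA) = \supp(\bfB)$, spans $\calK$ as a $\KK$-vector space. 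Since $\calG \subseteq \calG^\dag$, the goal reduces to checking that $\ini_\sigma(f) \in (\ini_\sigma(g) : g \in \calG)$ for every $f \in \calG^\dag$.

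To carry this out, I would fix $f = \bdT_\bfA - \bdT_\bfB \in \calG^\dag$ with $\ini_\sigma(f) = \bdT_\bfA$ and $\bfA = [\bda^1,\dots,\bda^p]$. When $\bfA$ is standard, standardness gives $\bfB \ge_\sigma \bfA$, which is compatible with $\ini_\sigma(f) = \bdT_\bfA$ only if $\bfB = \bfA$, i.e.\ $f = 0$, a trivial case. When $\bfA$ is not standard, \Cref{StandardExist} furnishes indices $1 \le h < k \le p$ for which the $2 \times (n+1)$ subtableau $[\bda^h,\bda^k]$ is not standard; applying \Cref{algo_standard_tab} to it produces a standard tableau $[\bdp,\bdq]$ with the same support. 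The crux is then to conclude that $\bdp$ and $\bdq$ again lie in $\widetilde{\calD_r}$, which is precisely \Cref{LexClosed}(b). With that in hand, the quadratic binomial $f' \coloneqq T_{\bda^h} T_{\bda^k} - T_\bdp T_\bdq$ lies in $\calG = \bfI(\widetilde{\calD_r})$, and since $[\bdp,\bdq]$ is standard its leading term $\ini_\sigma(f') = T_{\bda^h} T_{\bda^k}$ divides $\ini_\sigma(f) = \bdT_\bfA$; this is exactly the divisibility we want.

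The hard part is entirely concentrated in the membership $\bdp,\bdq \in \widetilde{\calD_r}$: in the rectangular case of \Cref{FiberofOneGen} this was automatic, whereas here it is where the standardizability of \Cref{def:SDP} genuinely enters, and \Cref{LexClosed} is the tool that captures it. Everything else is a routine transcription of the earlier argument. Finally, for the Koszul conclusion I would note that every element of $\calG = \bfI(\widetilde{\calD_r})$ is a quadratic binomial, so $\calK$ possesses a quadratic Gr\"obner basis; hence $\calF(\bigoplus_{i=1}^r I_\calD)$ is $G$-quadratic and therefore Koszul by \cite[Theorem 6.7]{EH}.
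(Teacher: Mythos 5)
Your proposal is correct, but it is organized differently from the paper's own proof. The paper disposes of \Cref{FiberD} in two lines: it uses \Cref{extendtoN+1} and then \emph{reduces to the rectangular case} by toric elimination, citing \cite[Proposition 4.13]{Sturmfels} and \cite[Proposition 2.1]{DeNegri} together with \Cref{LexClosed} (the latter guaranteeing that in the reduced Gr\"obner basis for the ambient rectangular diagram, any element whose leading term uses only variables $T_{\tilde\bda}$ with $\tilde\bda\in\widetilde{\calD_r}$ has its trailing term in those variables as well, so the Gr\"obner basis restricts to the subconfiguration), after which \Cref{FiberofOneGen} applies verbatim. You instead \emph{inline} the argument of \Cref{FiberofOneGen}: span $\calK$ by binomials $\bdT_\bfA-\bdT_\bfB$ via \cite[Lemma 4.1]{Sturmfels} (valid for any toric ideal, not just rectangular configurations), locate a non-standard $2\times(n+1)$ subtableau $[\bda^h,\bda^k]$ via \Cref{StandardExist}, standardize it, and use \Cref{LexClosed}(b) exactly where the rectangular case used closure of the rectangle. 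This is a sound and somewhat more self-contained route — it avoids the elimination machinery and proves the Gr\"obner property directly for the standardizable diagram — at the cost of repeating the proof of \Cref{FiberofOneGen} rather than reusing it; the paper's reduction buys brevity. One micro-step you should make explicit: the standardization $[\bdp,\bdq]$ of a $2$-row tableau with the same column supports is automatically an interchange $\bfI_H(\bda^h,\bda^k)$ for $H=\{j:p_j=a^k_j\ne a^h_j\}$ (each column is either kept or swapped), which combined with $\bdp,\bdq\in\widetilde{\calD_r}$ from \Cref{LexClosed}(b) is what places $f'=T_{\bda^h}T_{\bda^k}-T_{\bdp}T_{\bdq}$ in $\calG=\bfI(\widetilde{\calD_r})$; your Koszul conclusion via the quadratic Gr\"obner basis and \cite[Theorem 6.7]{EH} matches the paper's intent.
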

\begin{proof}
    By \Cref{extendtoN+1}, \Cref{LexClosed}, \cite[Proposition 4.13]{Sturmfels}, and \cite[Proposition 2.1]{DeNegri}, it suffices to assume that $\calD$ is a rectangular Ferrers diagram. Meanwhile, the corresponding result for this special case has already been given in \Cref{FiberofOneGen}.
\end{proof}

\begin{Corollary}
    \label{CMFiber}
    Let $\calD$ be a standardizable $n$-dimensional Ferrers diagram and $I_{\calD}$ be the ideal associated with $\calD$. Then the special fiber $\calF(\bigoplus_{i=1}^{r} I_{\calD})$ is a Cohen--Macaulay normal domain. 
    Furthermore, if the field $\KK$ has characteristic zero, $\calF(\bigoplus_{i=1}^{r} I_{\calD})$ has rational singularities. If $\KK$ has positive characteristic, $\calF(\bigoplus_{i=1}^{r} I_{\calD})$ is $F$-rational .
\end{Corollary}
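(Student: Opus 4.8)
The plan is to deduce \Cref{CMFiber} directly from the Gröbner basis description obtained in \Cref{FiberD}, invoking standard structural theorems for toric and binomial algebras. Since \Cref{FiberD} establishes that $\calG = \bfI(\widetilde{\calD_r})$ is a Gröbner basis of $\calK$ with respect to the order $\sigma$, and every element of $\bfI(\widetilde{\calD_r})$ is a binomial with squarefree leading term, the initial ideal $\ini_\sigma(\calK)$ is generated by squarefree monomials. First I would record that $\calF(\bigoplus_{i=1}^r I_\calD)$ is already known to be a domain, being a semigroup ring (as noted in the Remark following the definition of the presentation ideals). The key leverage is that $\ini_\sigma(\calK)$ is squarefree.

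Next I would apply the theorem of Sturmfels (and the framework of \cite{Sturmfels}) that a toric ideal whose initial ideal with respect to some term order is squarefree gives rise to a \emph{normal} semigroup ring; equivalently, the associated semigroup is normal, so $\calF(\bigoplus_{i=1}^r I_\calD)$ is integrally closed. By Hochster's theorem, a normal affine semigroup ring over a field is Cohen--Macaulay; this yields both the normality and the Cohen--Macaulay conclusions simultaneously. Alternatively, one may argue through the squarefree initial ideal: $T/\ini_\sigma(\calK)$ is the Stanley--Reisner ring of the simplicial complex whose faces correspond to standard tableaux (the monomials not divisible by any $\ini_\sigma(\bfI_H(\bda,\bdb))$), and one checks this complex is shellable or Cohen--Macaulay, whence $\calF$ is Cohen--Macaulay by semicontinuity of depth under passage to the initial ideal.

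For the singularity statements, the decisive point is that a squarefree initial ideal certifies more than normality. I would cite the result (due to Hochster--Huneke in positive characteristic and its characteristic-zero counterpart) that a normal toric variety, equivalently an affine semigroup ring arising from a normal semigroup, is a direct summand of a polynomial ring and hence has rational singularities in characteristic zero and is $F$-rational (indeed $F$-regular) in positive characteristic. More precisely, normal affine semigroup rings are splinters and strongly $F$-regular in positive characteristic, which implies $F$-rationality; and by the correspondence between $F$-rationality and rational singularities via reduction to positive characteristic (Smith, Hara), the characteristic-zero case follows. So the chain is: squarefree initial ideal $\Rightarrow$ normal semigroup ring $\Rightarrow$ Cohen--Macaulay with rational/$F$-rational singularities.

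The main obstacle I anticipate is not any deep new argument but rather pinning down the exact citations and verifying that the hypotheses of the invoked theorems are met in this multigraded setting. In particular I must confirm that $\calF(\bigoplus_{i=1}^r I_\calD) \cong \calF(I_{\widetilde{\calD_r}})$ is genuinely a toric (lattice) semigroup ring so that the normal-semigroup machinery applies verbatim, and that the squarefree-initial-ideal criterion for normality is stated for the relevant term order $\sigma$. Since the leading terms of $\bfI(\widetilde{\calD_r})$ are products of two distinct variables $T_{\bda}T_{\bdb}$, the squarefreeness is immediate from \Cref{2-minors}, so the delicate part is purely bibliographic bookkeeping rather than mathematical.
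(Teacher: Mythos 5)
Your proposal is correct and follows essentially the same route as the paper's own proof: squarefree initial ideal from \Cref{FiberD} and \Cref{2-minors}, normality via Sturmfels' criterion, Cohen--Macaulayness via Hochster's theorem on normal affine semigroup rings, and the singularity statements via the direct-summand property combined with Boutot's theorem in characteristic zero and Hochster--Huneke's strong $F$-regularity in positive characteristic. The alternative Stanley--Reisner/shellability route you sketch for Cohen--Macaulayness is unnecessary (and unverified), but the main chain of citations matches the paper exactly.
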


\begin{proof}
    Notice that $\calF(\bigoplus_{i=1}^{r} I_{\calD})$ is isomorphic to the semigroup ring $\KK[I_{\widetilde{\calD_r}}]$.
    By \Cref{FiberD} and \Cref{2-minors}, the presentation ideal $\calK$ of $\calF(\bigoplus_{i=1}^{r} I_{\calD})$ has a squarefree initial ideal. Therefore, $\calF(\bigoplus_{i=1}^{r} I_{\calD})$ is normal by \cite[Proposition 13.15]{Sturmfels}, and it is Cohen--Macaulay by
    \cite[Theorem 1]{Hochster}.

    The following argument from \cite[Corollary 2.3]{Sagbi} proves the statements regarding the singularities. We cite it here for readers' convenience.
    By Hochster \cite[Proposition 1]{Hochster}, since $\calF(\bigoplus_{i=1}^{r} I_{\calD})$ is normal, it is a direct summand of a polynomial ring. Thus by a theorem of Boutot \cite{Boutot}, $\calF(\bigoplus_{i=1}^{r} I_{\calD})$ has rational singularities if the characteristic of $\KK$ is zero.
    Furthermore, by a theorem of Hochster and Huneke \cite{MR1044348}, $\calF(\bigoplus_{i=1}^{r} I_{\calD})$ is strongly $F$-regular. In particular, it is $F$-rational, if $\KK$ has positive characteristic.
\end{proof}

In the last part of this work, we investigate the multi-Rees algebras. The following lemma shows that
any collection of Ferrers ideals 
satisfies the strong $\ell$-exchange property as defined in \Cref{def:Strong_L_Ex_P}. It paves the way for finding the presentation equations of the multi-Rees algebra $\calR(\bigoplus_{i=1}^{r} I_{\calD})$.

\begin{Lemma}
    \label{LEx}
    Let $\calD_1,\dots,\calD_r$ be a collection of Ferrers diagrams, where $\calD_k$ is $n_k$-dimensional for each $k$. Set $n\coloneqq \max\{n_1,\dots,n_r\}$. Let $m$ be large enough so that we have the Ferrers ideal $I_{\calD_k} \subset R_k\coloneqq \KK[x_{i,j},i\in [n_k],j\in [m]] \subseteq  R \coloneqq \KK[x_{1,1},\dots,x_{1,m},x_{2,1},\dots,x_{2,m},\dots,x_{n,1},\dots,x_{n,m}]$, for each $k$. Then, the collection of ideals $I_{\calD_1},\dots,I_{\calD_r}$ satisfies the strong $\ell$-exchange property.
\end{Lemma}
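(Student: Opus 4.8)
The plan is to verify \Cref{def:Strong_L_Ex_P} directly by a counting argument, after unwinding the combinatorics of Ferrers generators. First I would relabel the doubly-indexed variables $x_{i,j}$ as a single sequence matching the lexicographic order of \Cref{notation:collection}: the variable $x_{s,c}$ receives position $(s-1)m+c$, so that $x_{s,c}$ precedes $x_{s',c'}$ exactly when $s<s'$, or $s=s'$ and $c<c'$. Writing $x_q = x_{s,c}$ for the distinguished variable in the definition, the hypotheses say that $u$ and $v$ have equal degrees in every variable lexicographically before $x_{s,c}$, and that $\deg_{x_{s,c}}(u) < \deg_{x_{s,c}}(v)$.

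Next I would record the shape of the generators. Each minimal generator of $I_{\calD_k}$ is $\bdx_{\bda} = \prod_{i=1}^{n_k} x_{i,a_i}$ with $\bda \in \calD_k$, so it is squarefree with exactly one variable in each row $i \le n_k$, and $\deg_{x_{s,c}}(\bdx_{\bda}) \in \{0,1\}$, equal to $1$ iff $s \le n_k$ and $a_s = c$. The crucial consequence is that the only way to turn a generator $\bdx_{\bda}$ into another generator of the same ideal by an exchange $x_q \bdx_{\bda}/x_{q'}$ with $q'>q$ is to keep $q,q'$ in the same row $s$ and lower the $s$-th coordinate: if $a_s = c'>c$ and $q'=(s-1)m+c'$, then $x_{s,c}\bdx_{\bda}/x_{s,c'} = \bdx_{\bda'}$, where $\bda'$ replaces $a_s$ by $c$, and $\bda'\in\calD_k$ because $\calD_k$ is downward closed. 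Thus the property reduces to producing a factor $\bdx_{\bda}$ of $u$ coming from an ideal with $n_k \ge s$ and satisfying $a_s > c$.

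I would then establish existence by counting $s$-th coordinates. The number of factors of $u$ (resp.\ $v$) that possess an $s$-th coordinate equals $W_s \coloneqq \sum_{k:\, n_k \ge s} w_k$, the same for both since the tuple $(w_1,\dots,w_r)$ is fixed. For each value $c'$, let $n^u_{c'}$ and $n^v_{c'}$ count the factors of $u$ and $v$ whose $s$-th coordinate is $c'$; these equal $\deg_{x_{s,c'}}(u)$ and $\deg_{x_{s,c'}}(v)$ respectively. The hypotheses give $n^u_{c'}=n^v_{c'}$ for $c'<c$ and $n^u_c < n^v_c$, so $\sum_{c'\le c} n^u_{c'} < \sum_{c'\le c} n^v_{c'}$; subtracting from the common total $W_s$ yields $\sum_{c'>c} n^u_{c'} > \sum_{c'>c} n^v_{c'} \ge 0$. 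Hence some factor of $u$ has $s$-th coordinate strictly larger than $c$, which is the desired $\bdx_{\bda}$. Taking $q' = (s-1)m + a_s \le sm \le nm$ then satisfies $q<q'\le nm$ and completes the verification.

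The routine parts are the translation of indices and the closing arithmetic; the one step that requires genuine care is the reduction in the second paragraph, where I must argue that among all conceivable exchanges $x_q f/x_{q'}$ only the within-row coordinate lowering produces a generator, so that the single-variable-per-row structure of Ferrers generators is really being used. The counting inequality is then elementary, and the downward-closed (Ferrers) property supplies membership of the exchanged generator in $I_{\calD_k}$.
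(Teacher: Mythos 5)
Your proposal is correct and follows essentially the same route as the paper's proof: relabel the variables into a single lexicographic sequence, observe that the total row-$s$ degree of $u$ and $v$ is the fixed quantity $\sum_{k:\,n_k\ge s} w_k$, deduce from the partial-sum inequality that some factor of $u$ has $s$-th coordinate exceeding $c$, and invoke the downward-closedness of the Ferrers diagram to conclude the exchanged monomial lies in $I_{\calD_k}$. Your second paragraph (that within-row lowering is the \emph{only} viable exchange) is a correct but unnecessary addition, since the definition only asks you to exhibit one valid exchange.
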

\begin{proof}
    For notational simplicity, we denote the variables 
    \[
        x_{1,1},\dots,x_{1,m},x_{2,1},\dots,x_{2,m},\dots,x_{n,1},\dots,x_{n,m} 
    \]
    in $R$ by $z_1, z_2, \dots,z_{nm}$.  Suppose that $\{f_{k,1},\dots,f_{k,\mu_i}\}$ is a minimal monomial generating set of $I_{\calD_k}$, for $k\in [r]$. Furthermore, suppose that $(w_1,\dots,w_r)\in \NN^r$, and $u=\prod_{k=1}^r\prod_{\ell=1}^{w_k} f_{k,\alpha_{k,\ell}}$, $v=\prod_{k=1}^r \prod_{\ell=1}^{w_k} f_{k,\beta_{k,\ell}}$ satisfy
    \begin{enumerate}[i]
        \item $\deg_{z_t}(u)=\deg_{z_t}(v)$ for $t=1,\dots,q-1$ with $q\le nm-1$,
        \item $\deg_{z_q}(u)<\deg_{z_q}(v)$.
    \end{enumerate}
    Suppose that $z_q=x_{i_0,j_0}$ in the above notation. 
    Then,
    \[
        \sum_{j=1}^{j_0-1} \deg_{x_{i_0,j}}(u) =\sum_{j=1}^{j_0-1} \deg_{x_{i_0,j}}(v)    
        \quad\text{ and }\quad
        \sum_{j=1}^{j_0} \deg_{x_{i_0,j}}(u) < \sum_{j=1}^{j_0} \deg_{x_{i_0,j}}(v).    
    \]
    Since
    \[
        \sum_{j=1}^m \deg_{x_{i_0,j}}(u)
        = \sum_{\substack{k\in [r],\\ n_k\ge i_0}} w_k
        =\sum_{j=1}^m \deg_{x_{i_0,j}}(v),
    \]
    we must have $j_0\le m-1$.
    Furthermore, we can find $j_1$ with $j_0<j_1\le m$ such that $\deg_{x_{i_0,j_1}}(u)\ge 1$. Let $z_{q'}=x_{i_0,j_1}$. It is clear that $q'>q$.
Whence, we can find $k,\ell$ such that $f_{k,\alpha_{k,\ell}}$ divides $u$ and $\deg_{z_{q'}}(f_{k,\alpha_{k,\ell}})= 1$. Since ${\calD_{k}}$ is a Ferrers diagram, it is clear that $z_q f_{k,\alpha_{k,\ell}}/z_{q'}\in I_{\calD_{k}}$.
\end{proof}

We conclude this work with the following result on the multi-Rees algebras $\calR(\bigoplus_{i=1}^{r} I_{\calD})$. As announced, it generalizes the previous \Cref{thm:3D}.

\begin{Theorem}
    \label{ReesnD}
    Let $\calD$ be a standardizable $n$-dimensional Ferrers diagram, $I_{\calD}$ be the ideal associated with $\calD$, and $r$ be a positive integer.  Furthermore, let 
    \[
        \calH\coloneqq \{x_{k,k_1}T_{(\bda,i)}-x_{k,k_2}T_{(\bdb,i)}: i\in [r], \bda,\bdb\in \calD, \text{ and } x_{k,k_1}\bdx_{\bda}=x_{k,k_2}\bdx_\bdb \}.
    \]
    Then, $\calG'=\bfI(\widetilde{\calD_r}) \cup \calH$ is a Gr\"{o}bner basis of the presentation ideal $\calJ$ of the multi-Rees algebra $\calR(\bigoplus_{i=1}^{r}I_{ \calD})$.
    In particular, $\calR(\bigoplus_{i=1}^{r} I_{\calD})$ is a Koszul Cohen--Macaulay normal domain.
    Furthermore, if the field $\KK$ has characteristic zero, $\calR(\bigoplus_{i=1}^{r} I_{\calD})$ has rational singularities. If $\KK$ has positive characteristic, $\calR(\bigoplus_{i=1}^{r} I_{\calD})$ is $F$-rational.
\end{Theorem}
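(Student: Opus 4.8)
The plan is to deduce the statement almost entirely from the apparatus already assembled, so that the proof reduces to invoking \Cref{thm:ReesIdeal} and then reading off the ring-theoretic consequences from the shape of $\calG'$. First I would verify the two hypotheses of \Cref{thm:ReesIdeal} for the collection consisting of $r$ copies of $I_{\calD}$: the strong $\ell$-exchange property holds by \Cref{LEx}, and $\bfI(\widetilde{\calD_r})$ is a Gr\"obner basis of the special fiber ideal $\calK$ with respect to $\sigma$ by \Cref{FiberD}. Applying \Cref{thm:ReesIdeal} with $\tau=\sigma$ and $\tau'=\sigma'$ then yields that $\calH_0\cup\bfI(\widetilde{\calD_r})$ is a Gr\"obner basis of $\calJ$ with respect to $\sigma'$, where $\calH_0$ denotes the distinguished collection of binomials singled out by the ``$k_2$ largest'' prescription in that theorem.

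The only point requiring genuine attention is that the set $\calH$ appearing in the present statement omits the ``$k_2$ largest'' restriction and so is strictly larger than $\calH_0$. I would dispose of this by the elementary observation that enlarging a Gr\"obner basis by further elements of the same ideal leaves it a Gr\"obner basis. Indeed, for any binomial $x_{k,k_1}T_{(\bda,i)}-x_{k,k_2}T_{(\bdb,i)}\in\calH$ the defining relation $x_{k,k_1}\bdx_{\bda}=x_{k,k_2}\bdx_{\bdb}$ gives $\phi(x_{k,k_1}T_{(\bda,i)})=x_{k,k_1}\bdx_{\bda}t_i=x_{k,k_2}\bdx_{\bdb}t_i=\phi(x_{k,k_2}T_{(\bdb,i)})$, so $\calH\subseteq\calJ$; since also $\calH_0\subseteq\calH$, the set of leading terms of $\calG'=\calH\cup\bfI(\widetilde{\calD_r})$ contains that of $\calH_0\cup\bfI(\widetilde{\calD_r})$ and hence still generates $\ini_{\sigma'}(\calJ)$. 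Therefore $\calG'$ is a Gr\"obner basis of $\calJ$.

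It then remains to extract the consequences from the form of $\calG'$. Every element of $\bfI(\widetilde{\calD_r})$ is a quadratic binomial with squarefree leading monomial $T_{\bda}T_{\bdb}$, and every element of $\calH$ is a quadratic binomial whose leading monomial has the squarefree form $x_{k,\ell}T_{(\bdc,i)}$. Hence $\calJ$ admits a quadratic Gr\"obner basis, so $\calR(\bigoplus_{i=1}^r I_{\calD})$ is $G$-quadratic and therefore Koszul (cf.\ \cite[Theorem 6.7]{EH}), and $\ini_{\sigma'}(\calJ)$ is a squarefree monomial ideal. Being a subring of $R[t_1,\dots,t_r]$ generated by monomials, $\calR(\bigoplus_{i=1}^r I_{\calD})$ is an affine semigroup ring, and in particular a domain; the squarefree initial ideal then gives normality by \cite[Proposition 13.15]{Sturmfels} and Cohen--Macaulayness by \cite[Theorem 1]{Hochster}, exactly as in the proof of \Cref{CMFiber}.

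Finally, the singularity assertions follow verbatim from the argument used for the special fiber in \Cref{CMFiber}: a normal affine semigroup ring is a direct summand of a polynomial ring by Hochster \cite[Proposition 1]{Hochster}, whence Boutot's theorem \cite{Boutot} gives rational singularities when $\Char\KK=0$, while Hochster--Huneke \cite{MR1044348} gives strong $F$-regularity, and in particular $F$-rationality, when $\KK$ has positive characteristic. The main (and essentially only) obstacle is the bookkeeping in the second paragraph, namely confirming that $\calH\subseteq\calJ$ and that passing from $\calH_0$ to the larger $\calH$ does not spoil the Gr\"obner basis property; everything else is a direct appeal to earlier results.
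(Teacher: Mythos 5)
Your proposal follows essentially the same route as the paper's own proof: the Gr\"obner basis claim is obtained by combining \Cref{FiberD}, \Cref{LEx}, and \Cref{thm:ReesIdeal}, and the Koszul, Cohen--Macaulay, normality, and singularity assertions are extracted by repeating the argument of \Cref{CMFiber}. Your extra check that the stated $\calH$ (which drops the ``$k_2$ largest'' normalization of \Cref{thm:ReesIdeal}) consists of elements of $\calJ$ and therefore only harmlessly enlarges the Gr\"obner basis is correct and addresses a point the paper leaves implicit---in fact, for a Ferrers ideal the exchange variable is forced to lie in the same row $k$ as $x_{k,k_1}$ (a cross-row quotient misses a row and cannot lie in $I_\calD$), so $\calH$ coincides with the distinguished set of \Cref{thm:ReesIdeal} up to sign.
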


\begin{proof}
    The fact that $\mathcal{G}'$ constitutes a Gr\"obner basis for the presentation ideal of $\mathcal{R}(\bigoplus_{i=1}^{r}I_{\mathcal{D}})$ with respect to the order $\sigma'$ defined in \Cref{order} is a direct outcome of \Cref{FiberD}, \Cref{LEx}, and \Cref{thm:ReesIdeal}. The remaining assertions can be established through a comparable argument employed in the proof of \Cref{CMFiber}.
\end{proof}

We conclude this work with a simple example that demonstrates that the same proving strategy cannot be applied to the case of multi-Rees algebras associated with distinct Ferrers diagrams. A new order must be found to apply similar proving steps. 

\begin{Example}
    \label{exam:diffD}    
    Let $\calD_1$ be the minimal $2$-dimensional Ferrers diagram that contains the lattice points $(4,1)$ and $(2,4)$. Similarly, let $\calD_2$ be the similar diagram that contains the lattice points  $(1,3)$ and $(3,1)$. They are standardizable.   In the reduced Gr\"obner basis of the presentation ideal $\calK$ of the special fiber ring $\calF(I_{\calD_1} \oplus I_{\calD_2})$ with respect to the natural lexicographic order, there exist $4$ extra binomials of degree $3$.
\end{Example}

\begin{acknowledgment*}
    The second author is partially supported by the ``the Fundamental Research Funds for Central Universities'' and ``the Innovation Program for Quantum Science and Technology'' (2021ZD0302902).
\end{acknowledgment*}

\bibliography{ReesBib}
\end{document}